\def\R{\mathbb R}
\def\N{\mathbb N}
\def\ga{\gamma}
\def\de{\delta}
\def\ep{\epsilon}
\def\si{\sigma}
\def\ta{\theta}
\def\var{\varphi}
\def\na{\nabla}
\def\Om{\Omega}  
\def\De{\Delta}      
\def\cal{\mathcal}
\def\L{\mathcal L}                                       
\def\wq{\infty}
\def\pa{\partial}
\def\loc{\text{\rm loc}}
\newcommand{\D}{{\rm d}}
\newcommand{\wto}{\rightharpoonup}                
\numberwithin{equation}{section}
\newtheorem{theorem}{Theorem}[section]
\newtheorem{definition}[theorem]{Definition}
\newtheorem{lemma}[theorem]{Lemma}
\newtheorem{proposition}[theorem]{Proposition}
\theoremstyle{definition}
\begin{document}
\title[Multi-peak solutions]{Multi-peak positive solutions to a class of  Kirchhoff equations}

     \author[P. Luo,  S. Peng, C. Wang and C.-L. Xiang]{Peng Luo, Shuangjie Peng, Chunhua Wang and  Chang-Lin Xiang}

\address[Peng Luo]{ School of Mathematics and Statistics and Hubei Key Laboratory of Mathematical Sciences,  Central China Normal University, Wuhan,  430079, P.R. China}
\email[]{luopeng@whu.edu.cn}
\address[Shuangjie Peng]{School of Mathematics and Statistics and Hubei Key Laboratory of Mathematical Sciences, Central China Normal University,  Wuhan,  430079, P.R. China}
\email[]{sjpeng@mail.ccnu.edu.cn}
\address[Chunhua Wang]{School of Mathematics and Statistics and Hubei Key Laboratory of Mathematical Sciences, Central China Normal University,  Wuhan,  430079, P.R. China}
\email[]{chunhuawang@mail.ccnu.edu.cn}
\address[Chang-Lin Xiang]{School of Information and Mathematics, Yangtze University, Jingzhou 434023, P.R. China}
\email[]{changlin.xiang@yangtzeu.edu.cn}
\thanks{Luo and Wang are partially supported by  self-determined research funds of CCNU from colleges' basic research and operation of MOE(CCNU16A05011, CCNU17QN0008). Peng and Wang are also financially supported by NSFC (No. 11571130, No.11671162). Xiang is  financially supported by  the Yangtze Youth Fund, No. 2016cqn56.}

\begin{abstract}
 In the present paper, we consider the  nonlocal Kirchhoff  problem
 \begin{eqnarray*} -\left(\epsilon^2a+\epsilon b\int_{\mathbb{R}^{3}}|\nabla u|^{2}\right)\Delta u+V(x)u=u^{p},\,\,\,u>0 &  & \text{in }\mathbb{R}^{3}, \end{eqnarray*} where $a,b>0$, $1<p<5$ are constants, $\epsilon>0$ is  a parameter.  Under some mild assumptions on the function $V$, we obtain multi-peak solutions for $\epsilon$ sufficiently small by Lyapunov-Schmidt reduction method. Even though many results on single peak solutions to singularly perturbed Kirchhoff problems have been derived in the literature by various methods, there exist no results on multi-peak solutions before this paper, due to some difficulties caused by the nonlocal term $\left(\int_{\mathbb{R}^3}|\nabla u|^2\right)\Delta u$. A remarkable  new feature of this problem is that  the corresponding unperturbed problem turns out to be a system of partial differential equations, but not a  single Kirchhoff equation, which is quite different from most of elliptic singular perturbation problems.
\end{abstract}


\maketitle

{\small
\keywords {\noindent {\bf Keywords:} Kirchhoff equations;  Multi-peak positive solutions; Local Pohozaev identity; Lyapunov-Schmidt reduction}
\smallskip
\newline
\subjclass{\noindent {\bf 2010 Mathematics Subject Classification:} 35A01 $\cdot$ 35B25 $\cdot$ 35J20 $\cdot$ 35J60}
}
\bigskip

\section{Introduction and main result}

Let $a,b>0$ and $1<p<5$. In this paper, we are concerned with the
following singularly perturbed Kirchhoff problem
\begin{eqnarray}
-\left(\ep^{2}a+\ep b\int_{\R^{3}}|\na u|^{2}\right)\De u+V(x)u=u^{p}, & u>0 & \text{in }\R^{3},\label{eq: Kirchhoff}
\end{eqnarray}
where $\ep>0$ is a parameter, $V:\R^{3}\to\R$ is a bounded continuous
function.

Problem (\ref{eq: Kirchhoff}) and its variants have been studied
extensively in the literature. To extend the classical D'Alembert's
wave equations for free vibration of elastic strings, Kirchhoff \cite{Kirchhoff-1883}
proposed for the first time the following time dependent wave equation
\[
\rho\frac{\pa^{2}u}{\pa t^{2}}-\left(\frac{P_{0}}{h}+\frac{E}{2L}\int_{0}^{L}\left|\frac{\pa u}{\pa x}\right|^{2}\right)\frac{\pa^{2}u}{\pa x^{2}}=0.
\]
Bernstein \cite{Bernstein-1940} and Pohozaev \cite{Pohozaev-1975}
studied the above type of Kirchhoff equations quite early. Much attention
was received until J.L. Lions \cite{Lions-1978} introducing an abstract
functional framework to this problem. More interesting results can
be found in e.g. \cite{Arosio-Panizzi-1996,DAncona-Spagnolo-1992}
and the references therein. From a mathematical point of view, Kirchhoff
equations is nonlocal, in the sense that, the term $\left(\int|\na u|^{2}\D x\right)\De u$
depends not only on the pointwise value of $\De u$, but also on the
integral of $|\na u|^{2}$ over the whole space. This new feature
brings new mathematical difficulties that make the study of Kirchhoff
type equations particularly interesting. We refer to e.g. \cite{Perera-Zhang-2006}
and to e.g. \cite{Deng-Peng-Shuai-2015,Figueiredo et al-2014,Guo-2015,He-Li-2015,Li-Li-Shi-2012,Li-Ye-2014}
for mathematical researches on Kirchhoff type equations on bounded
domains and in the whole space, respectively.

Eq. (\ref{eq: Kirchhoff}) is also closely related to Schr\"odinger
equations. Indeed, when $b=0$, Eq. (\ref{eq: Kirchhoff}) reduces
to the problem
\begin{eqnarray*}
-\ep^{2}\De u+V(x)u=u^{p}, & u>0 & \text{in }\R^{3},
\end{eqnarray*}
which are special cases of the perturbed Schr\"odinger equations
\begin{eqnarray}
-\ep^{2}\De u+V(x)u=u^{q}, & u>0 & \text{in }\R^{n},\label{eq: Schrodinger equations}
\end{eqnarray}
where $1<q$ is subcritical and $n\ge1$. Flower and Weinstein \cite{Flower-Weinstein-1986},
Oh \cite{Oh-1988,Oh-1990}, del Pino and Felmer \cite{delPino-Felmer-1996,delPino-Felmer-1998},
Gui \cite{Gui-1996} and many others proved the existence of solutions
to Eq. (\ref{eq: Schrodinger equations}) for $\ep>0$ sufficiently
small (the so called semiclassical solutions). In particular, Oh \cite{Oh-1990}
obtained multi-peak solutions to problem (\ref{eq: Schrodinger equations})
by using the Lyapunov-Schmidt reduction method, and del Pino and Felmer
\cite{delPino-Felmer-1998}, Gui \cite{Gui-1996} obtained multi-peak
solutions to the above perturbed Schr\"odinger equations with more
general nonlinearity by variational methods, respectively. For main results on multi-peak solutions, see e.g. \cite{Cao-Noussair-Yan-2008,Cao-Peng-2009,Noussair-Yan-2000} and the references therein. We remark
that to construct multi-peak solutions, a common building block of
Flower and Weinstein \cite{Flower-Weinstein-1986}, Oh \cite{Oh-1988,Oh-1990}
is the unique positive radial solution in $H^{1}(\R^{n})$ of the
unperturbed Schr\"odinger equation
\begin{eqnarray}
-\De u+u=u^{q}, & u>0 & \text{in }\R^{n}.\label{eq: unperturbed Schrodinger equation}
\end{eqnarray}

Now we review some known results on Kirchhoff equations. It seems
that He and Zou \cite{He-Zou-2012} is the first to study singularly
perturbed Kirchhoff equations. In \cite{He-Zou-2012}, they considered
the problem
\begin{eqnarray*}
-\left(\ep^{2}a+\ep b\int_{\R^{3}}|\na u|^{2}\right)\De u+V(x)u=f(u), & u>0 & \text{in }\R^{3},
\end{eqnarray*}
where $V$ is assumed to satisfy the global condition of Rabinowitz
\cite{Rabinowitz-1992-ZAMP}
\begin{equation}
\liminf_{|x|\to\wq}V(x)>\inf_{x\in\R^{3}}V(x)>0,\label{eq: Rabinowitz condition}
\end{equation}
and $f:\R\to\R$ is a nonlinear function with subcritical growth of
type $u^{q}$ for some $3<q<5$. They proved the existence of multiple
positive solutions for $\ep$ sufficiently small. Among other results,
Wang et al. \cite{Wang-Tian-Xu-Zhang-2012} established
some existence and nonexistence  results for Kirchhoff equations with critical growth
\begin{eqnarray*}
-\left(\ep^{2}a+\ep b\int_{\R^{3}}|\na u|^{2}\right)\De u+V(x)u=f(u)+u^{5}, & u>0 & \text{in }\R^{3},
\end{eqnarray*}
where $V$ and $f$ satisfy similar conditions as that of \cite{He-Zou-2012}.
He, Li and Peng \cite{He-Li-Peng-2014} improved an existence result
of Wang et al. \cite{Wang-Tian-Xu-Zhang-2012} by allowing
that $V$ only satisfies local conditions: there exists a bounded
open set $\Om\subset\R^{3}$ such that
\begin{equation}
\inf_{\Om}V<\inf_{\pa\Om}V\label{eq: local Rabinowitz condition}
\end{equation}
by a penalization method. Later, He and Li \cite{He-Li-2015} proved
the existence of solutions for $\ep$ sufficiently small to the following
problem
\begin{eqnarray}\label{eq: He-Li}
-\left(\ep^{2}a+\ep b\int_{\R^{3}}|\na u|^{2}\right)\De u+V(x)u=u^{q}+u^{5}, & u>0 & \text{in }\R^{3},
\end{eqnarray}
with $V$ satisfying the local condition (\ref{eq: local Rabinowitz condition})
and $1<q<3$. For Kirchhoff problems with more general nonlinearity,
see He \cite{He-2016-JDE}. We remark that all the results mentioned
above were derived by variational methods. In particular, in the case when the subcritical power $q$ belongs to the interval $(1,3)$ as considered in Eq. \eqref{eq: He-Li}, additional difficulty occurs comparing with the case $q\ge 3$. Roughly speaking, this is due to the fact that  the nonlocal term $\left(\int_{\R^{3}}|\na u|^{2}\right)\De u$ is homogeneous of 3-degree, which makes the growth $u^q$ "sublinear" if $q<3$. Thus, the important (AR) condition  fails in this case which prevents from  obtaining a bounded Palais-Smale sequence and using the Nehari manifold directly  to derive solutions. To overcome this difficulty, quite technical methods have been introduced and delicate estimates have been derived in He and Li  \cite{He-Li-2015}.

Quite recently, Li and
the authors of the present paper \cite{LLPWX-2017} established uniqueness
and nondegeneracy results for positive solutions to the unperturbed
Kirchhoff equation
\begin{eqnarray}
-\left(a+b\int_{\R^{3}}|\na u|^{2}\right)\De u+u=u^{p}, & u>0 & \text{in }\R^{3}.\label{eq: unperturbed Kirchhoff}
\end{eqnarray}
Then, using the Lyapunov-Schmidt reduction method, they proved the
existence and uniqueness of single peak solutions to Eq. (\ref{eq: Kirchhoff})
for all $1<p<5$. The building block of the single peak solution obtained
by Li et al. \cite{LLPWX-2017} is the unique positive radial solution
of Eq. (\ref{eq: unperturbed Kirchhoff}). An advantage of this reduction method is that it can deal with the subcritical power $p$ in $(1,5)$ simultaneously, unlike using variational methods as explained in above.

Notice that even though it has been known that problem (\ref{eq: Kirchhoff})
has even multiple single peaks solutions, it is still an open problem  whether
there exist multi-peak solutions to problem (\ref{eq: Kirchhoff}), which
is in striking contrast to the extensive results on multi-peak solutions
to singularly perturbed Schr\"odinger equations (\ref{eq: Schrodinger equations}).
This motivates us to study multi-peak solutions to problem (\ref{eq: Kirchhoff}).
To be precise, we give the definition of multi-peak solutions of Eq.
(\ref{eq: Kirchhoff}) as usual.

\begin{definition}\label{def: multi-peak} Let $k\in \{1,2,\ldots\}$.
We see that $u_{\ep}$
is a $k$-peak solution of \eqref{eq: Kirchhoff} if $u_{\ep}$ satisfies

(i) $u_{\ep}$ has $k$ local maximum points $y_{\ep}^{j}\in\R^{3}$,
$j=1,2,\ldots,k$, satisfying
\[
y_{\ep}^{j}\to a_{j}
\]
for some $a_{j}\in\R^{3}$ as $\ep\to0$ for each $j$;

(ii) For any given $\tau>0$, there exists $R\gg1$, such that
\begin{eqnarray*}
|u_{\ep}(x)|\le\tau &  & \text{for }x\in\R^{3}\backslash\cup_{j=1}^{k}B_{R\ep}(y_{\ep}^{j});
\end{eqnarray*}

(iii) There exists $C>0$ such that
\[
\int_{\R^{3}}(\ep^{2}a|\na u_{\ep}|^{2}+u_{\ep}^{2})\le C\ep^{3}.
\]
\end{definition}

Note that we do not assume $a_{i}\neq a_{j}$ for $i\neq j$. In fact,
there are two  cases for $1\le i,j\le k$: (i) $a_{i}\neq a_{j}$ for all
and $i\neq j$, and (ii) $a_{i}=a_{j}$ for some $i\neq j$. In the present paper,
we will only consider the first case.

To state our main results, we introduce some notation and assumptions.
We assume throughout the paper that $V$ satisfies

(V1) $V\in L^{\wq}(\R^{3})$ and $0<\inf_{\R^{3}}V\le\sup_{\R^{3}}V<\wq$;

(V2) There exist $k$ ($k\ge2$) distinct points $\{a_{1},\ldots,a_{k}\}\subset\R^{3}$
such that for every $1\le i\le k$, $V\in C^{\theta}(\bar{B}_{R_{0}}(a_{i}))$
for some $\ta\in(0,1)$, and
\begin{eqnarray*}
V(a_{i})<V(x) &  & \text{for }0<|x-a_{i}|<r
\end{eqnarray*}
holds for some $r$, $0<r<R_{0}\equiv\frac{1}{2}\min_{1\le i,j\le k,i\ne j}|a_{i}-a_{j}|$.

Denote
\begin{eqnarray*}
\langle u,v\rangle_{\ep}=\int_{\R^{3}}\left(\ep^{2}a\na u\cdot\na v+V(x)uv\right) & \text{and} & \|u\|_{\ep}^{2}=\langle u,u\rangle_{\ep}
\end{eqnarray*}
and let
\[
H_{\ep}=\{u\in H^{1}(\R^{3}):\|u\|_{\ep}<\wq\}.
\]

Our main result reads as follows.

\begin{theorem} \label{thm: main reuslt-existence}Assume that $V$
satisfies (V1) (V2). Then, for $\ep>0$ sufficiently small, equation
\eqref{eq: Kirchhoff} has a $k$-peak solution defined as in the
definition of (\ref{def: multi-peak}) concentrating around $a_{i}$,
$1\le i\le k$.\end{theorem}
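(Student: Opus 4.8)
The plan is a finite-dimensional Lyapunov--Schmidt reduction, and the first task is to identify the correct building blocks. Rescaling $x=y_i+\ep z$ near the $i$-th peak turns the coefficient $\ep^2 a+\ep b\int_{\R^3}|\na u|^2$ into $\ep^2\big(a+\ep^{-1}b\int_{\R^3}|\na u|^2\big)$; for a configuration made of $k$ well-separated bumps with profiles $W_j$ one has $\ep^{-1}\int_{\R^3}|\na u|^2=\sum_j\int_{\R^3}|\na W_j|^2+o(1)$, so with $t:=a+b\sum_j\int_{\R^3}|\na W_j|^2$ the rescaled equation converges to $-t\,\De W_i+V(a_i)W_i=W_i^p$. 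Thus the unperturbed problem is the coupled system
\[
-t\,\De W_i+V(a_i)W_i=W_i^{p},\quad W_i>0\ \text{in }\R^3,\qquad t=a+b\sum_{j=1}^{k}\int_{\R^3}|\na W_j|^2,\quad i=1,\dots,k,
\]
whose coupling runs entirely through the single scalar $t$. For frozen $t$ each equation has the unique positive radial solution $W_i(t)(z)=V(a_i)^{1/(p-1)}U\big(\sqrt{V(a_i)/t}\,z\big)$ with $U$ the ground state of $-\De U+U=U^p$, so writing $A_i(t):=\int_{\R^3}|\na W_i(t)|^2$ the consistency relation collapses to the scalar equation $F(t):=a+b\sum_iA_i(t)-t=0$, which has a unique positive root. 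This is precisely the sense in which the limiting problem is a system rather than a single Kirchhoff equation.

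I would next record the nondegeneracy of this system, which drives the whole reduction. Linearising, a kernel element $(\phi_i)$ solves $L_i^0\phi_i=(\de t)\De W_i$, where $L_i^0:=-t\De+V(a_i)-pW_i^{p-1}$ and $\de t:=2b\sum_j\int_{\R^3}\na W_j\cdot\na\phi_j$. Since $\De W_i\perp\ker L_i^0=\span\{\pa_{z_l}W_i\}$ and, by differentiating the profile equation in $t$, $(L_i^0)^{-1}\De W_i=\pa_t W_i$, substituting $\phi_i=(\de t)\pa_t W_i+(\text{kernel})$ into the defining formula for $\de t$ yields the dichotomy $\de t=0$ or $1=b\sum_iA_i'(t)$. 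The latter is exactly $F'(t)=0$, which a direct computation excludes; hence $\de t=0$ is forced and the kernel is the expected $3k$-dimensional translation space. In short, system nondegeneracy follows from the single-equation nondegeneracy of \cite{LLPWX-2017} together with transversality of the scalar root $F(t)=0$.

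With $U_{\ep,y_i}(x):=W_i((x-y_i)/\ep)$ in hand I would set $W_{\ep,\mathbf y}=\sum_{i=1}^kU_{\ep,y_i}$ for $\mathbf y=(y_1,\dots,y_k)$ in small balls around the $a_i$, estimate the residual $S_\ep(W_{\ep,\mathbf y})$ (the equation evaluated at the approximate solution) in the dual of $H_\ep$, and seek a true solution $u=W_{\ep,\mathbf y}+\phi$ with $\phi$ orthogonal to $\span\{\pa_{y_i^l}U_{\ep,y_i}\}$. The crux is that the linearised operator $L_\ep$ now contains the nonlocal term $\phi\mapsto -b\big(\int_{\R^3}\na W_{\ep,\mathbf y}\cdot\na\phi\big)\De W_{\ep,\mathbf y}$, a rank-one object coupling all $k$ peaks through a single scalar; the system nondegeneracy above is exactly what guarantees that $L_\ep$ remains invertible on the orthogonal complement with a bound uniform in small $\ep$, the coupling entering as a controlled rank-one perturbation rather than destroying the spectrum. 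A contraction-mapping argument then produces $\phi=\phi_\ep(\mathbf y)$, depending smoothly on $\mathbf y$ with quantitative smallness, so that $u_{\ep,\mathbf y}=W_{\ep,\mathbf y}+\phi_\ep(\mathbf y)$ solves the equation up to a linear combination $\sum_{i,l}c_{il}\pa_{y_i^l}U_{\ep,y_i}$ of the approximate kernel. I expect this invertibility --- keeping the nonlocal coupling from spoiling the uniform bound --- to be the main analytic obstacle.

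It remains to choose $\mathbf y$ so that all $c_{il}$ vanish. Because (V2) supplies only H\"older regularity of $V$, I would fix $\mathbf y$ via local Pohozaev identities rather than by differentiating $V$: multiplying the equation by $\pa_{x_l}u_{\ep,\mathbf y}$ and integrating over a fixed ball $B_\de(y_i)$, the boundary integrals are exponentially small in $1/\ep$ since each bump decays like $e^{-c|x-y_i|/\ep}$, and after rescaling the surviving bulk term is a multiple of $\int_{\R^3}\big(V(y_i+\ep z)-V(y_i)\big)\pa_{z_l}(W_i^2)\,dz$, which reads off the local behaviour of $V$ at $y_i$. This gives a $3k$-dimensional system for $\mathbf y$; since each $a_i$ is a strict local minimum of $V$ by (V2), a minimization (equivalently a degree) argument over $\prod_i\bar B_r(a_i)$ produces an interior solution $\mathbf y_\ep\to(a_1,\dots,a_k)$ along which every $c_{il}=0$, so $u_\ep:=u_{\ep,\mathbf y_\ep}$ is a genuine solution. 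The exponential localization of the profiles and the smallness of $\phi_\ep$ finally verify the local maxima, decay and energy bounds (i)--(iii) of Definition~\ref{def: multi-peak}.
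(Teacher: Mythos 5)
Your first three steps track the paper closely. The identification of the limiting problem as the system coupled through the single scalar $t=a+b\sum_j\int|\na w^j|^2$, the reduction of uniqueness to the scalar root of $F(t)=a+\bar b\sqrt t-t=0$ (the paper gets $\sqrt t=\tfrac12(\bar b+\sqrt{\bar b^2+4a})$), and the derivation of nondegeneracy from the single-equation nondegeneracy plus transversality $F'(t)\neq0$ are exactly Propositions 2.2--2.3 of the paper. Your reduction is phrased in the PDE form (residual, contraction mapping, Lagrange multipliers $c_{il}$) while the paper works variationally (expanding $J_\ep(Y,\varphi)=J_\ep(Y,0)+l_\ep(\varphi)+\tfrac12\langle\L_\ep\varphi,\varphi\rangle+R_\ep(\varphi)$ and finding a critical point of $J_\ep(Y,\cdot)$ on $E_{\ep,Y}$); these are equivalent, and the uniform invertibility of $\L_\ep$ on $E_{\ep,Y}$ is indeed the analytic heart, proved in the paper by a blow-up/contradiction argument that lands on the nondegeneracy of the limiting system.

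The final step is where your argument, as written, does not close. Under (V2) the potential is only H\"older continuous near $a_i$ with a strict local minimum there: it need not be differentiable, and the local Pohozaev balance $\int(V(y_i+\ep z)-V(y_i))\,\pa_{z_l}(w^i)^2\,dz$, while well defined, carries no sign or degree information for such a $V$ (it can oscillate arbitrarily on any sphere around $a_i$ while staying above $V(a_i)$), so neither a degree count nor a direct solution of the $3k$-dimensional system $c_{il}=0$ is available. The paper avoids this entirely: Pohozaev identities are used only for the \emph{necessary} condition $\na V(a_i)=0$ when $V\in C^1$, not for existence. Existence is closed by minimizing the reduced energy $j_\ep(Y)=J_\ep(Y,\varphi_{\ep,Y})$ over $D_\de=\prod_i\bar B_\de(a_i)$, using the expansion $I_\ep(W_{\ep,Y})=C_1\ep^3+\ep^3\sum_jC_{2,j}(V(y^j)-V(a_j))+O(\ep^{3+\theta})$ with $C_{2,j}=\tfrac12\int(w^j)^2>0$, together with the comparison configuration $z^j_\ep=a_j+\ep^\eta e_j$ (for which $j_\ep(Z_\ep)=C_1\ep^3+O(\ep^{3+\theta})$) to force the minimizer off $\pa D_\de$, where $V(y^j)-V(a_j)\ge c_j>0$. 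Since $Y\mapsto\varphi_{\ep,Y}$ is $C^1$ even though $V$ is not, an interior minimizer of $j_\ep$ is a critical point and kills all the multipliers by the standard reduction lemma. Your parenthetical ``minimization'' is the right instinct, but it must be applied to the reduced energy, not to the Pohozaev system; you would also need the quantitative estimates on $l_\ep(\varphi_{\ep,Y})$, $\|\varphi_{\ep,Y}\|_\ep$ and $R_\ep$ to see that the error terms do not swamp the $\ep^3(V(y^j)-V(a_j))$ term in the boundary comparison.
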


To prove Theorem \ref{thm: main reuslt-existence}, let us first recall that to construct multi-peak solutions to the Schr\"odinger equation (\ref{eq: Schrodinger equations}), it is very important
to understand the limiting equation as $\ep\to0$, which is known
as the unperturbed Schr\"odinger equation (\ref{eq: unperturbed Schrodinger equation}).
Denote by $Q_{i}$ the unique (see \cite{Kwong-1989}) positive radial
solution to equation
\begin{eqnarray*}
-\De Q_{i}+V(a_{i})Q_{i}=Q_{i}^{q} &  & \text{in }\R^{n}.
\end{eqnarray*}
Then, to construct a $k$-peak solution to Eq. (\ref{eq: Schrodinger equations})
concentrated at $\{a_{1},\ldots,a_{k}\}$, natural candidates are
functions of the form $u_{\ep}=\sum_{i=1}^{k}Q_{i}((x-y_{i,\ep})/\ep)+\var_{\ep}$,
where $y_{i,\ep}\to a_{i}$ and $\var_{\ep}$ should be appropriately
chosen such that $u_{\ep}$ is indeed a solution to equation (\ref{eq: Schrodinger equations}).

It seemed that the above  idea should also work for problem (\ref{eq: Kirchhoff}) as well, with the unperturbed
Kirchhoff equation (\ref{eq: unperturbed Kirchhoff}) as the limiting
equation. Indeed, to construct single peak solutions to problem (\ref{eq: Kirchhoff}), this idea works, as can be seen in Li et al. \cite{LLPWX-2017}. However, as to construct multi-peak solutions, it turns out to be wrong. That is, there is no
multi-peak solutions of the form $u_{\ep}=\sum_{i=1}^{k}U^{i}((x-y_{\ep}^{i})/\ep)+\var_{\ep}$,
where $U^{i}$ is the unique (see \cite{LLPWX-2017}) positive solution
to equation
\begin{eqnarray*}
-\left(a+b\int_{\R^{3}}|\na u|^{2}\right)\De u+V(a_{i})u=u^{p}, & u>0 & \text{in }\R^{3}.
\end{eqnarray*}
For a proof, see Proposition \ref{prop: nonexistence} in Section \ref{sec: form and location}.
To overcome this difficulty, we will  start from the definition of multi-peak solutions of problem (\ref{eq: Kirchhoff}). We first prove that if $u_{\ep}$ is a $k$-peak solution to \eqref{eq: Kirchhoff},
then $u_{\ep}$ must be of a particular form, and $a_{j}$ must be
critical points of $V$ if $V$ is continuously differentiable in
a neighborhood of $a_{i}$. In fact, via this step, we  prove
that the right limiting equation of problem (\ref{eq: Kirchhoff})
is a system of partial differential equations, see Proposition \ref{prop: the form of multi-peak solutions} in Section \ref{sec: form and location}. This reveals a new
phenomenon of multi-peak solutions for singular perturbation problems, as which is quite different from the known knowledge on singularly perturbed elliptic
equations.

With the help of the above understanding on limiting equations, we
will combine the variational method and the Lyapunov-Schmidt reduction
to prove Theorem \ref{thm: main reuslt-existence}. Note that the
variational functional corresponding to Eq. (\ref{eq: Kirchhoff})
is
\begin{equation}
I_{\epsilon}(u)=\frac{1}{2}\|u\|_{\ep}^{2}+\frac{b\epsilon}{4}\Big(\int_{\R^{3}}|\nabla u|^{2}\Big)^{2}-\frac{1}{p+1}\int_{\R^{3}}u_{+}^{p+1}\label{eq: variational functional}
\end{equation}
 for $u\in H_{\ep}$, where $u_{+}=\max(u,0)$. It is standard to
verify that $I_{\ep}\in C^{2}(H_{\ep})$. So we are left to find a
critical point of $I_{\ep}$. By the results in Section \ref{sec: form and location},
we will construct solutions of the form $u_{\ep}=\sum_{i=1}^{k}w^{i}((x-y_{\ep}^{i})/\ep)+\var_{\ep}$,
where $(w^{1},\cdots,w^{k})$  satisfy the system of  partial
differential equations (see  Proposition \ref{prop: the form of multi-peak solutions} in Section \ref{sec: form and location}). To use reduction method, we will have to prove that the system has a unique and nondegenerate positive solution. Then,
following the scheme of Cao and Peng \cite{Cao-Peng-2009}, we reduce
the problem to find a critical point of a finite dimensional function.  Due to the presence
of the nonlocal term $\left(\int_{\R^{3}}|\na u|^{2}\right)\De u$,
we have to deal with
 the estimates on the orders of $\ep$ carefully, which brings more technical difficulties.

The paper is organized as follows. In section \ref{sec: form and location},
we derive the form and location of multi-peak solutions to Eq. (\ref{eq: Kirchhoff}).
In section 3, we prepare some necessary estimates for the proof of
Theorem \ref{thm: main reuslt-existence}, and in section 4 we prove
Theorem \ref{thm: main reuslt-existence}.

Our notations are standard. Denote $u_{+}=\max(u,0)$ for $u\in\R$.
We use $B_{R}(x)$ (and $\bar{B}_{R}(x)$) to denote open (and close)
balls in $\R^{3}$ centered at $x$ with radius $R$. Without confuse
of notations, we write $\int u$ to denote Lebesgue integrals over
$\R^{3}$, unless otherwise stated.
 By the usual abuse of notations, we write $u(x)=u(r)$
with $r=|x|$ whenever $u$ is a radial function in $\R^{3}$. We
will use $C$ and $C_{j}$ ($j\in\N$) to denote various positive
constants, and $O(t)$, $o(t)$ to mean $|O(t)|\le C|t|$ and $o(t)/t\to0$
as $t\to0$, respectively.

\section{The form and locations of multi-peak solutions\label{sec: form and location}}

In this section, we explore the form of multi-peak solutions of Eq.
(\ref{eq: Kirchhoff}) and locate the related concentrating points.
We will use the following inequality repeatedly.

\begin{lemma} For any $2\le q\le6$, there exists a constant $C>0$
depending only on $V$, $a$ and $q$, but independent of $\ep$,
such that
\begin{equation}
\|\var\|_{L^{q}(\R^{3})}\le C\ep^{\frac{3}{q}-\frac{3}{2}}\|\var\|_{\ep}\label{eq: epsilon-Sobolev inequality}
\end{equation}
holds for all $\var\in H_{\ep}$. \end{lemma}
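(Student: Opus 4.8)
The plan is to reduce \eqref{eq: epsilon-Sobolev inequality} to the classical, $\ep$-independent Sobolev embedding $H^{1}(\R^{3})\hookrightarrow L^{q}(\R^{3})$ (valid precisely for $2\le q\le 6$) by rescaling the spatial variable so that the parameter $\ep$ is absorbed. Concretely, given $\var\in H_{\ep}$ I would set $v(y):=\var(\ep y)$, so that $v\in H^{1}(\R^{3})$, and then track how each term transforms under the change of variables $x=\ep y$, $\D x=\ep^{3}\,\D y$.

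First I would compute the two pieces of $\|\var\|_{\ep}^{2}$. Since $\na_{x}\var(x)=\ep^{-1}(\na v)(x/\ep)$, the change of variables gives $\ep^{2}a\int|\na\var|^{2}=\ep^{3}a\int|\na v|^{2}$ and $\int V\var^{2}=\ep^{3}\int V(\ep y)\,v^{2}$, hence
\[
\|\var\|_{\ep}^{2}=\ep^{3}\Big(a\int_{\R^{3}}|\na v|^{2}+\int_{\R^{3}}V(\ep y)\,v^{2}\Big).
\]
By (V1) one has $V(\ep y)\ge \inf_{\R^{3}}V>0$ for every $y$, so with $c_{0}:=\min\{a,\inf_{\R^{3}}V\}>0$ I obtain the lower bound $\|\var\|_{\ep}^{2}\ge \ep^{3}c_{0}\,\|v\|_{H^{1}(\R^{3})}^{2}$; crucially $c_{0}$ depends only on $a$ and $V$, not on $\ep$. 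Next, the classical Sobolev inequality yields $\|v\|_{L^{q}(\R^{3})}\le C_{q}\|v\|_{H^{1}(\R^{3})}$ for $2\le q\le 6$, while scaling the $L^{q}$ norm back gives $\|\var\|_{L^{q}}=\ep^{3/q}\|v\|_{L^{q}}$. Chaining these three facts,
\[
\|\var\|_{L^{q}}=\ep^{3/q}\|v\|_{L^{q}}\le \ep^{3/q}C_{q}\|v\|_{H^{1}}\le C_{q}c_{0}^{-1/2}\,\ep^{3/q-3/2}\|\var\|_{\ep},
\]
which is exactly \eqref{eq: epsilon-Sobolev inequality} with $C=C_{q}c_{0}^{-1/2}$.

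This argument is essentially bookkeeping, so I do not expect a genuine obstacle: the only points that need care are the exact powers of $\ep$ produced at each scaling step and the verification that the constant is uniform in $\ep$. The latter is guaranteed entirely by the strict positivity $\inf_{\R^{3}}V>0$ from (V1), which keeps $c_{0}$ bounded away from $0$ independently of $\ep$; were $V$ merely nonnegative the constant would degenerate. As a sanity check the endpoints are consistent: at $q=2$ the exponent $3/q-3/2$ vanishes and the claim reduces to $\inf_{\R^{3}}V\,\|\var\|_{L^{2}}^{2}\le\|\var\|_{\ep}^{2}$, while at $q=6$ it reduces to the dimensional Sobolev inequality $\|\var\|_{L^{6}}\le C\|\na\var\|_{L^{2}}$ after absorbing $\ep^{2}a\|\na\var\|_{L^{2}}^{2}\le\|\var\|_{\ep}^{2}$.
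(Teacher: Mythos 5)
Your rescaling argument is correct: the change of variables $x=\ep y$ converts $\|\var\|_{\ep}$ into $\ep^{3/2}$ times an $\ep$-uniformly equivalent $H^{1}$-norm of $v$ (using $\inf_{\R^{3}}V>0$ from (V1)), and the classical embedding $H^{1}(\R^{3})\hookrightarrow L^{q}(\R^{3})$ for $2\le q\le6$ then gives exactly the stated power $\ep^{\frac{3}{q}-\frac{3}{2}}$. The paper does not prove this lemma inline but cites (3.6) of \cite{LLPWX-2017}, where the argument is precisely this standard scaling reduction, so your proof coincides with the intended one.
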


For a proof of (\ref{eq: epsilon-Sobolev inequality}), see (3.6)
of \cite{LLPWX-2017}.

For convenience, we also introduce notation
\[
u_{\ep,y}(x)=u((x-y)/\ep)
\]
for $\ep>0$ and $y\in\R^{3}$.

Denote by $U^{(i)}\in H^{1}(\R^{3})$ the unique positive radial solution
(see Li et al. \cite{LLPWX-2017}) to equation
\begin{eqnarray*}
-\left(a+b\int|\na u|^{2}\right)\De u+V(a_{i})u=u^{p}, & u>0 & \text{in }\R^{3}.
\end{eqnarray*}
Then, for each $i=1,...,k$, $\ensuremath{U_{\epsilon,y_{i}}^{(i)}}=U^{(i)}((x-y_{i})/\ep)>0$
satisfies
\begin{eqnarray}
-\left(\ep^{2}a+\ep b\int|\na U_{\epsilon,y_{i}}^{(i)}|^{2}\right)\De U_{\epsilon,y_{i}}^{(i)}+V(a_{i})U_{\epsilon,y_{i}}^{(i)}=(U_{\epsilon,y_{i}}^{(i)})^{p} &  & \text{in }\R^{3}.\label{3.2}
\end{eqnarray}
As aforementioned in the introduction, we have

\begin{proposition}\label{prop: nonexistence}
Problem (\ref{eq: Kirchhoff})
has no $k$-peak solutions $(k\ge2)$ of the form
\begin{eqnarray}
u_{\epsilon}(x)=\sum_{i=1}^{k}U_{\epsilon,y_{i}}^{(i)}(x)+\varphi_{\epsilon}(x), & \text{with} & \|\varphi_{\ep}\|_{\epsilon}=o(\epsilon^{\frac{3}{2}}),\label{3.1}
\end{eqnarray}
with
\begin{eqnarray*}
y_{i}\to a_{i} &  & \text{as }\ep\to0
\end{eqnarray*}
for each $i=1,\ldots,k$.
\end{proposition}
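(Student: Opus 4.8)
The plan is to argue by contradiction: assume a $k$-peak solution $u_\ep$ of the form \eqref{3.1} exists and derive an impossible identity by testing the equation against a single bump. The crucial structural observation is that the nonlocal coefficient $A_\ep:=\ep^{2}a+\ep b\int|\na u_\ep|^{2}$ is built from the gradient energy of the \emph{whole} sum, so it couples the bumps. From the scaling $U^{(i)}_{\ep,y_i}(x)=U^{(i)}((x-y_i)/\ep)$ one gets $\int|\na U^{(i)}_{\ep,y_i}|^{2}=\ep\|\na U^{(i)}\|_{L^{2}}^{2}$; since the centres $y_i\to a_i$ are mutually distant and each $U^{(i)}$ decays exponentially, the cross terms $\int\na U^{(i)}_{\ep,y_i}\cdot\na U^{(j)}_{\ep,y_j}$ $(i\ne j)$ are exponentially small, while $\|\na\var_\ep\|_{L^{2}}=o(\ep^{1/2})$ follows from $\|\var_\ep\|_\ep=o(\ep^{3/2})$. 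Hence
\[
\int|\na u_\ep|^{2}=\ep\sum_{j=1}^{k}\|\na U^{(j)}\|_{L^{2}}^{2}+o(\ep),\qquad A_\ep=\ep^{2}\Big(a+b\sum_{j=1}^{k}\|\na U^{(j)}\|_{L^{2}}^{2}\Big)+o(\ep^{2}).
\]
The point is that the bracket contains the \emph{full} sum $\sum_j\|\na U^{(j)}\|_{L^2}^2$, whereas the coefficient in the single-bump equation \eqref{3.2} for $U^{(i)}$ is only $\ep^{2}(a+b\|\na U^{(i)}\|_{L^2}^{2})$.

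Next I would test the weak form of \eqref{eq: Kirchhoff} against $U^{(i)}_{\ep,y_i}$ and expand every term to leading order $\ep^{3}$. Using the expansion of $A_\ep$ above, the exponential smallness of all inter-bump cross terms, the continuity of $V$ at $a_i$ (so that $\int V(x)(U^{(i)}_{\ep,y_i})^{2}=\ep^{3}V(a_i)\|U^{(i)}\|_{L^{2}}^{2}+o(\ep^{3})$), Cauchy--Schwarz together with $\|\var_\ep\|_\ep=o(\ep^{3/2})$ for the terms linear in $\var_\ep$, and the inequality \eqref{eq: epsilon-Sobolev inequality} to absorb the nonlinear $\var_\ep$-contributions, one obtains after dividing by $\ep^{3}$ and letting $\ep\to0$
\[
\Big(a+b\sum_{j=1}^{k}\|\na U^{(j)}\|_{L^{2}}^{2}\Big)\|\na U^{(i)}\|_{L^{2}}^{2}+V(a_i)\|U^{(i)}\|_{L^{2}}^{2}=\|U^{(i)}\|_{L^{p+1}}^{p+1}.
\]
On the other hand, testing the single-bump identity \eqref{3.2} against $U^{(i)}_{\ep,y_i}$ and rescaling gives exactly
\[
\big(a+b\|\na U^{(i)}\|_{L^{2}}^{2}\big)\|\na U^{(i)}\|_{L^{2}}^{2}+V(a_i)\|U^{(i)}\|_{L^{2}}^{2}=\|U^{(i)}\|_{L^{p+1}}^{p+1}.
\]

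Subtracting the two identities, every term cancels except the nonlocal one, leaving
\[
b\Big(\sum_{j\ne i}\|\na U^{(j)}\|_{L^{2}}^{2}\Big)\|\na U^{(i)}\|_{L^{2}}^{2}=0 .
\]
Since $b>0$, each $U^{(j)}$ is a nontrivial positive solution so $\|\na U^{(j)}\|_{L^{2}}^{2}>0$, and $k\ge2$ guarantees at least one index $j\ne i$, the left-hand side is strictly positive, a contradiction. This rules out solutions of the form \eqref{3.1}.

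I expect the main obstacle to be the rigorous bookkeeping of the error terms at order $\ep^{3}$, rather than the algebra of the final contradiction. Controlling the nonlinear remainder $\int\big[u_\ep^{p}-(U^{(i)}_{\ep,y_i})^{p}\big]U^{(i)}_{\ep,y_i}$ is the delicate point: it must be shown to be $o(\ep^{3})$ uniformly over the whole subcritical range $1<p<5$, which I would do by splitting the difference via the elementary bound $|(s+t)^{p}-s^{p}|\le C(|s|^{p-1}|t|+|t|^{p})$ and estimating each piece by H\"older's inequality combined with the $\ep$-weighted Sobolev bound \eqref{eq: epsilon-Sobolev inequality}. The exponential-decay estimates needed to discard the inter-bump interactions, though standard, also have to be recorded carefully.
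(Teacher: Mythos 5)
Your argument is correct and is essentially the paper's own proof: both isolate the mismatch $bK_i=b\sum_{j\ne i}\|\na U^{(j)}\|_{L^2}^2$ between the nonlocal coefficient of the full solution and that of the single-bump equation \eqref{3.2}, test against one bump $U^{(i)}_{\ep,y_i}$, and show the resulting strictly positive $\ep^3$-order term must equal $o(\ep^3)$. The only difference is cosmetic (you subtract the two scalar identities after testing, the paper subtracts the PDEs before testing), and your error bookkeeping for the $V$-term, the $\var_\ep$-terms and the nonlinear remainder matches the paper's estimates of $J_1$, $J_2$, $J_3$.
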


We remark that Proposition \ref{prop: nonexistence} holds as well
in the case $a_{1}=a_{2}=\cdots=a_{k}$, provided we assume in addition
that
\[
|y_{i}-y_{j}|/\ep\to\wq
\]
holds for $i\neq j$.

\begin{proof} For simplicity, write
\[
\ep_{1}^{2}=\ep^{2}a+\ep b\int|\na u_{\ep}|^{2}.
\]
Since $U^{(i)}$ and its derivatives decay exponentially at infinity
(see Li et al. \cite{LLPWX-2017}), there exists a constant $\ga>0$,
such that for each $i\neq j$ there hold
\[
\int\left(\ep^{2}\left|\nabla U_{\ep,y_{i}}^{(i)}\cdot\nabla U_{\ep,y_{j}}^{(j)}\right|+U_{\ep,y_{i}}^{(i)}U_{\ep,y_{j}}^{(j)}\right)\D x=O\left(\ep^{3}e^{-\frac{\ga|y_{i}-y_{j}|}{\ep}}\right).
\]
Note that $|y_{i}-y_{j}|/\ep\to\wq$ since we assume $a_{i}\neq a_{j}$.
This implies
\begin{eqnarray}
\int\left(\ep^{2}\left|\nabla U_{\ep,y_{i}}^{(i)}\cdot\nabla U_{\ep,y_{j}}^{(j)}\right|+U_{\ep,y_{i}}^{(i)}U_{\ep,y_{j}}^{(j)}\right)\D x=o(\ep^{3}) &  & \text{for }i\neq j.\label{eq: small interaction}
\end{eqnarray}
Thus,
\begin{equation}
a\ep^{2}\le\ep_{1}^{2}=\epsilon^{2}\Big(a+b\sum_{i=1}^{k}\int|\nabla U^{(i)}|^{2}dx+o_{\ep}(1)\Big)\le A\ep^{2}\label{eq: estimate of epsilon 1}
\end{equation}
for some constant $A>a>0$, where $o_{\ep}(1)\to0$ as $\ep\to0$.

Assume that (\ref{3.1}) gives a solution $u_{\ep}$ to Eq. (\ref{eq: Kirchhoff}).
We derive
\begin{equation}
\sum_{i=1}^{k}\left(-\epsilon_{1}^{2}\Delta U_{\epsilon,y_{i}}^{(i)}+V(x)U_{\epsilon,y_{i}}^{(i)}\right)+(-\epsilon_{1}^{2}\Delta\varphi_{\epsilon}+V(x)\varphi_{\epsilon})=\Big(\sum_{i=1}^{k}U_{\epsilon,y_{i}}^{(i)}+\varphi_{\epsilon}\Big)^{p}.\label{3.3}
\end{equation}
Combining (\ref{3.2}) and (\ref{3.3}) yields
\begin{equation}
\begin{aligned} & \sum_{i=1}^{k}\left(-\left(\epsilon_{1}^{2}-\left(\epsilon^{2}a+\epsilon^{2}b\int|\nabla U^{(i)}|^{2}\right)\right)\Delta U_{\epsilon,y_{i}}^{(i)}\right)+\sum_{i=1}^{k}(V(x)-V(a_{i}))U_{\epsilon,y_{i}}^{(i)}\\
 & \quad+(-\epsilon_{1}^{2}\Delta\varphi_{\epsilon}+V(x)\varphi_{\epsilon})=\Big(\sum_{i=1}^{k}U_{\epsilon,y_{i}}^{(i)}+\varphi_{\epsilon}\Big)^{p}-\sum_{i=1}^{k}(U_{\epsilon,y_{i}}^{(i)})^{p}.
\end{aligned}
\label{3.5}
\end{equation}
Write
\[
K_{i}=\sum_{j\neq i}\int|\na U^{(j)}|^{2}dx>0.
\]
The first term of (\ref{3.5}) can be rewritten as $-\ep^{2}\sum_{i=1}^{k}\left(bK_{i}+o_{\ep}(1)\right)\Delta U_{\epsilon,y_{i}}^{(i)}$.
So regrouping (\ref{3.5}) gives
\begin{equation}
\begin{aligned}-\ep^{2}\sum_{i=1}^{k}\left(bK_{i}+o_{\ep}(1)\right)\Delta U_{\epsilon,y_{i}}^{(i)} & =-\sum_{i=1}^{k}(V(x)-V(a_{i}))U_{\epsilon,y_{i}}^{(i)}-(-\epsilon_{1}^{2}\Delta\varphi_{\epsilon}+V(x)\varphi_{\epsilon})\\
 & \quad+\Big(\big(\sum_{i=1}^{k}U_{\epsilon,y_{i}}^{(i)}+\varphi_{\epsilon}\big)^{p}-\sum_{i=1}^{k}(U_{\epsilon,y_{i}}^{(i)})^{p}\Big).
\end{aligned}
\label{3.6}
\end{equation}

Multiply $U_{\epsilon,y_{j}}^{(j)}$ on both sides of Eq. (\ref{3.6})
and then integrate over $\R^{3}$. By integrating by parts, we obtain
\begin{equation}
\begin{aligned} & \quad\sum_{i=1}^{k}\epsilon^{2}(bK_{i}+o_{\ep}(1))\int\nabla U_{\epsilon,y_{i}}^{(i)}\cdot\nabla U_{\epsilon,y_{j}}^{(j)}\\
 & =-\int\sum_{i=1}^{k}(V(x)-V(a_{i}))U_{\epsilon,y_{i}}^{(i)}U_{\epsilon,y_{j}}^{(j)}-\int(-\epsilon_{1}^{2}\Delta\varphi_{\epsilon}+V(x)\varphi_{\epsilon})U_{\epsilon,y_{j}}^{(j)}\\
 & \quad+\int\left(\left(\sum_{i=1}^{k}U_{\epsilon,y_{i}}^{(i)}+\varphi_{\epsilon}\right)^{p}-\sum_{i=1}^{k}(U_{\epsilon,y_{i}}^{(i)})^{p}\right)U_{\epsilon,y_{j}}^{(j)}\\
 & =:-J_{1}-J_{2}+J_{3}.
\end{aligned}
\label{3.7}
\end{equation}
By (\ref{eq: small interaction}),
\begin{equation}
\sum_{i=1}^{k}\epsilon^{2}(bK_{i}+o_{\ep}(1))\int\nabla U_{\epsilon,y_{i}}^{(i)}\cdot\nabla U_{\epsilon,y_{j}}^{(j)}=\epsilon^{3}\left(bK_{i}\int|\nabla U^{(i)}|^{2}dx+o_{\ep}(1)\right).\label{eq: estimate of LHS}
\end{equation}
To estimate $J_{1}$, split into
\[
\begin{aligned}J_{1} & =\int_{\R^{3}}(V(x)-V(a_{i}))(U_{\epsilon,y_{i}}^{(i)})^{2}+\sum_{i\neq j}\int(V(x)-V(a_{i}))U_{\epsilon,y_{i}}^{(i)}U_{\epsilon,y_{j}}^{(j)}\\
 & =:J_{11}+J_{12}.
\end{aligned}
\]
Since $V$ is bounded, (\ref{eq: small interaction}) implies $J_{12}=o(\ep^{3})$.
Decompose $J_{11}$ into
\[
\begin{aligned}J_{11} & =\int_{\R^{3}}(V(x)-V(y_{i}))(U_{\epsilon,y_{i}}^{(i)})^{2}+\int_{\R^{3}}(V(y_{i})-V(a_{i}))(U_{\epsilon,y_{i}}^{(i)})^{2}\\
 & =:J_{111}+J_{112}.
\end{aligned}
\]
By (V2), we have
\[
\begin{aligned}|J_{111}| & \leq\int_{B_{1}(y_{i})}|V(x)-V(y_{i})|(U_{\epsilon,y_{i}}^{(i)})^{2}+\int_{B_{1}^{c}(y_{i})}|V(x)-V(y_{i})|(U_{\epsilon,y_{i}}^{(i)})^{2}\\
 & \leq C\int_{B_{1}(y_{i})}|x-y_{i}|^{\theta}(U_{\epsilon,y_{i}}^{(i)})^{2}+2\|V\|_{L^{\infty}(\R^{3})}\int_{B_{1}^{c}(y_{i})}(U_{\epsilon,y_{i}}^{(i)})^{2}\\
 & \leq C\epsilon^{3+\theta}\int_{B_{\frac{1}{\epsilon}}(0)}|z|(U^{(i)}(z))^{2}+C\epsilon^{3}\int_{B_{\frac{1}{\epsilon}}^{c}(0)}e^{-2\sigma_{0}|z|}\\
 & =o(\epsilon^{3}).
\end{aligned}
\]
Since $y_{i}\to a_{i}$, we also have
\[
J_{112}=\int(V(y_{i})-V(a_{i}))(U_{\epsilon,y_{i}}^{(i)})^{2}=o(\ep^{3}).
\]
Hence $J_{11}=o(\ep^{3})$, which together with the estimate of $J_{12}$
gives
\begin{equation}
J_{1}=o(\ep^{3}).\label{eq: estimate of J1}
\end{equation}

The estimate of $J_{2}$ follows from (\ref{eq: estimate of epsilon 1})
and H\"older's inequality:
\[
J_{2}=\int(-\epsilon_{1}^{2}\Delta\varphi_{\epsilon}+V(x)\varphi_{\epsilon})U_{\epsilon,y_{j}}^{(j)}=O\left(\|\var_{\ep}\|_{\ep}\left\Vert U_{\epsilon,y_{j}}^{(j)}\right\Vert _{\ep}\right).
\]
Thus, by the assumption $\|\var_{\ep}\|_{\ep}=o(\ep^{3/2})$, we have
\begin{equation}
J_{2}=o(\ep^{3}).\label{eq: estimate of J2}
\end{equation}

To estimate the last term $J_{3}$, we apply an elementary inequality
to obtain
\[
\begin{aligned}|J_{3}| & \le C\int\left(\left(\sum_{i=1}^{k}U_{\epsilon,y_{i}}^{(i)}\right)^{p-1}|\varphi_{\epsilon}|+\sum_{i=1}^{k}U_{\epsilon,y_{i}}^{(i)}|\varphi_{\epsilon}|^{p-1}+|\var_{\ep}|^{p}\right)U_{\epsilon,y_{j}}^{(j)}\\
 & \le C\int\left(\sum_{i=1}^{k}\big(U_{\epsilon,y_{i}}^{(i)}\big)^{p-1}|\varphi_{\epsilon}|+\sum_{i=1}^{k}U_{\epsilon,y_{i}}^{(i)}|\varphi_{\epsilon}|^{p-1}+|\var_{\ep}|^{p}\right)U_{\epsilon,y_{j}}^{(j)}.
\end{aligned}
\]
Using H\"older's inequality, (\ref{eq: epsilon-Sobolev inequality})
and the assumption $\|\var_{\ep}\|_{\ep}=o(\ep^{3/2})$, we derive
\[
\int_{\R^{3}}(U_{\epsilon,y_{i}}^{(i)}\big)^{p-1}U_{\epsilon,y_{j}}^{(j)}|\varphi_{\epsilon}|\le\left\Vert U_{\epsilon,y_{i}}^{(i)}\right\Vert _{L^{p+1}(\R^{3})}^{p-1}\left\Vert U_{\epsilon,y_{j}}^{(j)}\right\Vert _{L^{p+1}(\R^{3})}\|\var_{\ep}\|_{L^{p+1}(\R^{3})}=o(\ep^{3}),
\]
\[
\int_{\R^{3}}U_{\epsilon,y_{i}}^{(i)}U_{\epsilon,y_{j}}^{(j)}|\varphi_{\epsilon}|^{p-1}\le\left\Vert U_{\epsilon,y_{i}}^{(i)}\right\Vert _{L^{p+1}(\R^{3})}\left\Vert U_{\epsilon,y_{j}}^{(j)}\right\Vert _{L^{p+1}(\R^{3})}\|\var_{\ep}\|_{L^{p+1}(\R^{3})}^{p-1}=o(\ep^{3}),
\]
\[
\int_{\R^{3}}|\var_{\ep}|^{p}U_{\epsilon,y_{j}}^{(j)}\le\left\Vert U_{\epsilon,y_{j}}^{(j)}\right\Vert _{L^{p+1}(\R^{3})}\|\var_{\ep}\|_{L^{p+1}(\R^{3})}^{p}=o(\ep^{3}).
\]
Therefore,
\begin{equation}
J_{3}=o(\ep^{3}).\label{eq: estimate of J3}
\end{equation}

Finally, combining (\ref{eq: estimate of LHS}), (\ref{eq: estimate of J1}),
(\ref{eq: estimate of J2}) and (\ref{eq: estimate of J3}) yields
\[
\epsilon^{3}\left(bK_{i}\int_{\R^{3}}|\nabla U^{(i)}|^{2}dx+o_{\ep}(1)\right)=o(\ep^{3})
\]
as $\ep\to0$. This is impossible since $K_{i}>0$. The proof of Proposition
\ref{prop: nonexistence} is complete.
\end{proof}

In the rest of this section, we deduce the form of multi-peak solutions
of Eq. \eqref{eq: Kirchhoff} and locate the corresponding concentrating
points. First we have

\begin{proposition}\label{prop: the form of multi-peak solutions}
Let $u_{\ep}$ be a $k$-peak solution of Eq. \eqref{eq: Kirchhoff}
defined as in the Definition \ref{def: multi-peak}, with local
maximum points at $y_{\ep}^{i}$ and $y_{\ep}^{i}\to a_{i}$. Then,
for $\ep>0$ sufficiently small, $u_{\ep}$ is of the form
\begin{equation}
u_{\epsilon}(x)=\sum_{i=1}^{k}w^{i}((x-y_{\ep}^{i})/\ep)+\varphi_{\epsilon}(x),\label{3.1-1}
\end{equation}
satisfying

(1) $(w^{1},\cdots,w^{k})$ is the unique positive radial solution
to the system
\begin{equation}
\begin{cases}
-(a+b\sum_{i=1}^{k}\int|\na w^{i}|^{2})\De w^{i}+V(a_{i})w^{i}=(w^{i})^{p} & \text{in }\R^{3},\\
w^{i}>0 & \text{in }\R^{3},\\
w^{i}\in H^{1}(\R^{3});
\end{cases}\label{eq: our limit system}
\end{equation}

(2) there holds
\[
\|\varphi_{\ep}\|_{\epsilon}=o(\epsilon^{\frac{3}{2}}).
\]
\end{proposition}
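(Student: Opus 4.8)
The plan is to argue directly from Definition \ref{def: multi-peak}, blowing up around each concentration point while carefully tracking the nonlocal coefficient. Set $t_\ep=\ep^2a+\ep b\int|\na u_\ep|^2$ and $A_\ep:=t_\ep\ep^{-2}=a+b\ep^{-1}\int|\na u_\ep|^2$. Condition (iii) forces $\int|\na u_\ep|^2=O(\ep)$, so $a\le A_\ep\le C$ uniformly in $\ep$; passing to a subsequence, $A_\ep\to A_0\ge a$. Rescaling around the $i$-th peak via $v_\ep^i(y)=u_\ep(\ep y+y_\ep^i)$, one finds that $v_\ep^i$ solves $-A_\ep\De v_\ep^i+V(\ep y+y_\ep^i)v_\ep^i=(v_\ep^i)^p$, and (iii) gives a uniform $H^1(\R^3)$ bound, so $v_\ep^i\wto w^i$ along a subsequence. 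Since $y=0$ is an interior local maximum of $v_\ep^i$, evaluating the equation there (where $\De v_\ep^i(0)\le0$) yields $v_\ep^i(0)^{p-1}\ge V(y_\ep^i)\ge\inf_{\R^3}V>0$ by (V1), so each limit $w^i$ is nontrivial.

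The first real difficulty, and in my view the crux of the whole argument, is to upgrade this to strong convergence in $H^1(\R^3)$, i.e.\ to exclude loss of mass at infinity uniformly in $\ep$; this is where (ii) enters decisively. Given small $\tau>0$, condition (ii) yields $v_\ep^i\le\tau$ on the annulus $R\le|y|\le c\ep^{-1}$ (the remaining peaks sit at distance $|a_i-a_j|\ep^{-1}\to\wq$ and are irrelevant). Choosing $\tau$ with $\tau^{p-1}\le\tfrac12\inf_{\R^3}V$ makes $v_\ep^i$ satisfy $-A_\ep\De v_\ep^i+\tfrac12(\inf_{\R^3}V)\,v_\ep^i\le0$ on that region, so comparison with an exponentially decaying barrier gives decay of $v_\ep^i$ that is uniform in $\ep$. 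This prevents mass from escaping, whence $v_\ep^i\to w^i$ strongly in $H^1(\R^3)$ and in $C^2_{\loc}$ by elliptic estimates. Passing to the limit, $w^i$ solves $-A_0\De w^i+V(a_i)w^i=(w^i)^p$ with $w^i\ge0$ and $w^i(0)>0$; the strong maximum principle gives $w^i>0$, and Kwong's uniqueness \cite{Kwong-1989} (after normalizing the coefficients) shows $w^i$ is radial about the origin.

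Next I would identify $A_0$, which is precisely the step that turns the limit into a coupled system. As $A_\ep=a+b\ep^{-1}\int|\na u_\ep|^2$ is a single global quantity, it is felt identically by every peak. Using the uniform exponential decay to localize the Dirichlet energy at the peaks and the strong convergence on each piece, one shows
\[
\ep^{-1}\int_{\R^3}|\na u_\ep|^2\,\D x\;\longrightarrow\;\sum_{i=1}^k\int_{\R^3}|\na w^i|^2\,\D y,
\]
the complement of $\cup_i B_\de(y_\ep^i)$ contributing only $o(\ep)$ by (ii) and the decay bounds. Therefore $A_0=a+b\sum_{j=1}^k\int|\na w^j|^2$, and substituting back shows that $(w^1,\dots,w^k)$ solves the system \eqref{eq: our limit system}. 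Granting uniqueness of its positive radial solution, the subsequence can be removed, so the full family converges and the $w^i$ are exactly the asserted profiles.

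Finally, for the form \eqref{3.1-1} and the error bound, put $\var_\ep=u_\ep-\sum_{i=1}^k w^i((\cdot-y_\ep^i)/\ep)$ and estimate $\|\var_\ep\|_\ep^2$ by splitting $\R^3$ into the balls $B_\de(y_\ep^i)$ and their complement. On $B_\de(y_\ep^i)$ the change of variables turns $\|\cdot\|_\ep^2$ into $\ep^3$ times the rescaled norm of $v_\ep^i-w^i$, plus cross terms from the other $w^j$ which are exponentially small because $|y_\ep^i-y_\ep^j|\ep^{-1}\to\wq$; since $v_\ep^i\to w^i$ in $H^1$, this piece is $o(\ep^3)$. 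On the complement, $u_\ep$ is negligible by (ii) and the decay estimates while each $w^i((\cdot-y_\ep^i)/\ep)$ is exponentially small, so this region contributes $o(\ep^3)$ as well. Summing gives $\|\var_\ep\|_\ep^2=o(\ep^3)$, that is $\|\var_\ep\|_\ep=o(\ep^{3/2})$. As indicated, the principal obstacle is the uniform-in-$\ep$ decay estimate: it both secures strong convergence without mass loss and makes the global coefficient $A_\ep$ converge to $a+b\sum_j\int|\na w^j|^2$, and this coupling of all peaks into one limiting coefficient is what compels the limit problem to be the system \eqref{eq: our limit system} rather than $k$ decoupled equations.
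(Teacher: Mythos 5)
Your argument follows essentially the same route as the paper's proof: blow up at each peak, identify the weak limit as a positive radial solution of a constant-coefficient Schr\"odinger equation, use the asymptotic orthogonality of the peaks to show the single global coefficient $a+b\ep^{-1}\int|\na u_\ep|^2$ converges to $a+b\sum_i\int|\na w^i|^2$ (so that the limit problem is the coupled system \eqref{eq: our limit system}), and invoke the uniqueness in Proposition \ref{prop: uniq and nondege} to upgrade from a subsequence to the full family; you merely spell out the concentration-compactness step (uniform exponential decay via barriers) that the paper only cites. The one caveat is that for $k\ge 2$ the convergence $v_\ep^i\to w^i$ cannot hold strongly in all of $H^1(\R^3)$, since the remaining peaks carry fixed mass at distance $|y_\ep^i-y_\ep^j|/\ep\to\wq$; what your barrier argument actually gives, and what your final splitting into $B_\de(y_\ep^i)$ and its complement correctly uses, is strong convergence on balls of radius $c/\ep$, so this is a slip of wording rather than of substance.
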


We remark that in the case $a_{1}=\cdots=a_{k}$, the following proof
also implies that
\begin{eqnarray*}
|y_{\ep}^{i}-y_{\ep}^{j}|/\ep\to\infty &  & \text{as }\ep\to0\text{ for }i\neq j.
\end{eqnarray*}

To prove Proposition \ref{prop: the form of multi-peak solutions},
we will need some observations on the system (\ref{eq: our limit system}).

\begin{proposition} \label{prop: uniq and nondege}
For every $a,b>0$,
there exists a unique solution $(w^{1},\cdots,w^{k})$ to the system
(\ref{eq: our limit system}) up to translations. Moreover, the constant
$a+b\sum_{i=1}^{k}\int|\na w^{i}|^{2}$ depends only on $a,b,k$ and
$V(a_{i})$ ($1\le i\le k$), but independent of the choice of the
solutions $w^{i}$, $1\le i\le k$.

Furthermore, for each $1\le i\le k$, $w^{i}$ is nondegenerate in
$H^{1}(\R^{3})$ in the sense that
\[
\operatorname{Ker}\L_{+}^{i}={\rm span}\left\{ \frac{\pa w^{i}}{\pa x_{j}}:1\le j\le3\right\} ,
\]
where $\L_{+}^{i}:H^{1}(\R^{3})\to H^{1}(\R^{3})$ is defined as
\[
\L_{+}^{i}\var\equiv-\left(a+b\sum_{l=1}^{k}\int|\na w^{l}|^{2}\right)\De\var-2b\left(\int\na w^{i}\cdot\na\var\right)\De w^{i}+V(a_{i})\var-p(w^{i})^{p-1}\var
\]
for $\var\in H^{1}(\R^{3})$.
\end{proposition}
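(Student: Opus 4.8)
The plan is to exploit that the system \eqref{eq: our limit system} is coupled only through the single scalar quantity $A:=a+b\sum_{l=1}^{k}\int|\na w^{l}|^{2}$: once $A$ is fixed, each equation decouples into a standard scalar problem. First I would treat $A>0$ as a parameter and solve, for each $i$, the scalar equation $-A\De w+V(a_{i})w=w^{p}$. Writing $U$ for the unique positive radial ground state of $-\De U+U=U^{p}$ (unique by \cite{Kwong-1989}), the scaling $w_{A}^{i}(x)=V(a_{i})^{1/(p-1)}U(\sqrt{V(a_{i})/A}\,x)$ gives the unique positive radial solution, and a change of variables yields $\int|\na w_{A}^{i}|^{2}=m_{i}\sqrt{A}$, where $m_{i}=V(a_{i})^{\frac{2}{p-1}-\frac12}\|\na U\|_{2}^{2}>0$ depends only on $p$ and $V(a_{i})$.

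\emph{Existence and uniqueness.} Any solution of the system determines its own constant $A$, and by the scalar uniqueness its components must equal the $w_{A}^{i}$; substituting into the definition of $A$ gives the single consistency equation
\[
A=a+b\sqrt{A}\,\sum_{i=1}^{k}m_{i}.
\]
With $M:=\sum_{i=1}^{k}m_{i}>0$ and $t=\sqrt{A}$ this becomes $t^{2}-bMt-a=0$, whose only positive root is $t=\tfrac12(bM+\sqrt{b^{2}M^{2}+4a})$. Hence $A$ is uniquely determined and depends only on $a,b,k$ and the values $V(a_{i})$, and so is $(w^{1},\dots,w^{k})$ up to translations. This proves parts (1) and (2).

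\emph{Nondegeneracy.} The inclusion ${\rm span}\{\pa w^{i}/\pa x_{j}\}\subset\operatorname{Ker}\L_{+}^{i}$ is immediate: on $\var=\pa w^{i}/\pa x_{j}$ the nonlocal term drops out because $\int\na w^{i}\cdot\na\pa_{x_{j}}w^{i}=\tfrac12\int\pa_{x_{j}}|\na w^{i}|^{2}=0$, and the rest is the $x_{j}$-derivative of the scalar equation. For the converse I would introduce the local linearization $L^{i}\var:=-A\De\var+V(a_{i})\var-p(w^{i})^{p-1}\var$, which after rescaling is conjugate to $V(a_{i})(-\De+1-pU^{p-1})$ and hence is nondegenerate with $\operatorname{Ker}L^{i}={\rm span}\{\pa w^{i}/\pa x_{j}\}$ by the classical nondegeneracy of $U$. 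The equation $\L_{+}^{i}\var=0$ then reads $L^{i}\var=c_{\var}\De w^{i}$ with the scalar $c_{\var}:=2b\int\na w^{i}\cdot\na\var$. Since $\De w^{i}$ is radial while $\operatorname{Ker}L^{i}$ is spanned by odd functions, $\De w^{i}\perp\operatorname{Ker}L^{i}$, so the equation is solvable; differentiating the scalar identity $-A\De w_{A}^{i}+V(a_{i})w_{A}^{i}=(w_{A}^{i})^{p}$ in the parameter $A$ produces the explicit particular solution $\psi:=\pa_{A}w_{A}^{i}$, which is radial, lies in $H^{1}(\R^{3})$, and satisfies $L^{i}\psi=\De w^{i}$. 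Therefore every solution has the form $\var=c_{\var}\psi+\sum_{j}d_{j}\,\pa w^{i}/\pa x_{j}$.

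\emph{The crux.} Reinserting this form into $c_{\var}=2b\int\na w^{i}\cdot\na\var$ and using again $\int\na w^{i}\cdot\na\pa_{x_{j}}w^{i}=0$ collapses everything to the scalar identity $c_{\var}\big(1-2b\int\na w^{i}\cdot\na\psi\big)=0$. Now $\int\na w^{i}\cdot\na\psi=\tfrac12\pa_{A}\int|\na w_{A}^{i}|^{2}=\tfrac{m_{i}}{4\sqrt{A}}$, so the bracket equals $1-\tfrac{bm_{i}}{2\sqrt{A}}$; and the consistency relation $A=a+bM\sqrt{A}$ with $a>0$ forces $\sqrt{A}>bM\ge bm_{i}$, whence $\tfrac{bm_{i}}{2\sqrt{A}}<\tfrac12$ and the bracket is strictly positive. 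Thus $c_{\var}=0$ and $\var\in{\rm span}\{\pa w^{i}/\pa x_{j}\}$, completing the proof. I expect this final nonvanishing to be the main obstacle: it is precisely where the global, self-consistent nature of $A$ — that it sums the gradient energies of all $k$ peaks and that $a>0$ — is indispensable, since otherwise the extra radial direction $\psi$ would join the kernel and break nondegeneracy.
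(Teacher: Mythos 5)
Your proposal is correct, and for the existence/uniqueness part it is essentially the paper's own argument: both of you observe that the system couples only through the scalar $A=a+b\sum_l\int|\na w^l|^2$, rescale each component to the Kwong ground state, and reduce everything to the quadratic consistency equation $t^2-bMt-a=0$ in $t=\sqrt A$, whose unique positive root shows $A$ is solution-independent. The only cosmetic difference is that the paper parametrizes by the solutions $Q^i$ of $-\De u+V(a_i)u=u^p$ rather than by the normalized ground state $U$. Where you genuinely add value is the nondegeneracy: the paper dismisses it in one line by citing \cite{LLPWX-2017}, whereas you supply a complete and correct argument --- split $\L_+^i$ as the local linearization $L^i$ plus the rank-one nonlocal term, use Kwong's nondegeneracy to get $\operatorname{Ker}L^i$, produce the radial particular solution $\psi=\pa_A w_A^i$ of $L^i\psi=\De w^i$ by differentiating the scalar equation in $A$, and then show the resonance factor $1-2b\int\na w^i\cdot\na\psi=1-\tfrac{bm_i}{2\sqrt A}$ is positive because the consistency equation forces $\sqrt A>bM\ge bm_i$. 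I checked the key identities ($\int|\na w_A^i|^2=m_i\sqrt A$, hence $\int\na w^i\cdot\na\psi=\tfrac{m_i}{4\sqrt A}$, and the orthogonality $\De w^i\perp\operatorname{Ker}L^i$ by parity) and they are all correct; this is in substance the argument of \cite{LLPWX-2017} adapted to the system, with the one system-specific point being that the denominator $\sqrt A$ now aggregates the gradient energies of all $k$ peaks, which only makes the resonance factor larger. No gap.
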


To prove this proposition, denote by $Q^{i}$ the unique positive
radial solution to equation
\begin{eqnarray}
-\De u+V(a_{i})u=u^{p}, & u>0 & \text{in }\R^{3}\label{eq: Schrodinger equaiton}
\end{eqnarray}
and $Q^{i}\in H^{1}(\R^{3})$. It is straightforward to deduce from
Kwong \cite{Kwong-1989} that $Q^{i}$ is nondegenerate in $H^{1}(\R^{3})$
in the sense that
\[
\operatorname{Ker}{\cal A}_{+}^{i}={\rm span}\left\{ \frac{\pa Q^{i}}{\pa x_{j}}:1\le j\le3\right\} ,
\]
where ${\cal A}_{+}^{i}$ is the linear operator around $Q^{i}$ defined
as
\[
{\cal A}_{+}^{i}\var\equiv-\De\var+V(a_{i})\var-p(Q^{i})^{p-1}\var
\]
for $\var\in L^{2}(\R^{3})$. Now we prove Proposition \ref{prop: uniq and nondege} briefly .

\begin{proof}[Proof of Proposition \ref{prop: uniq and nondege}]
Denote
\[
c=a+b\sum_{l=1}^{k}\int|\na w^{l}|^{2}.
\]
Then $\bar{w}^{i}(x)=w^{i}(\sqrt{c}x)$, $i=1,\ldots,k$,  satisfy Eq.
(\ref{eq: Schrodinger equaiton}). Hence, the uniqueness result of
Kwong \cite{Kwong-1989} implies that $\bar{w}^{i}(x)=Q^{i}(x-x_{i})$
for some $x_{i}\in\R^{3}$. Therefore,
\[
w^{i}(x)=Q^{i}((x-x_{i})/\sqrt{c}).
\]
This yields
\[
\int|\na w^{i}|^{2}=\sqrt{c}\int|\na Q^{i}|^{2}.
\]
As a consequence,
\[
c=a+\left(b\sum_{i=1}^{k}\|\na Q^{i}\|_{2}^{2}\right)\sqrt{c},
\]
which gives
\[
\sqrt{c}=\frac{1}{2}\left(\bar{b}+\sqrt{\bar{b}^{2}+4a}\right),
\]
where $\bar{b}=b\sum_{i=1}^{k}\|\na Q^{i}\|_{2}^{2}$. This shows
that $c$ depends only on $a,b,V(a_{i})$ and $p$, but independent
of the choice of the solutions $w^{i}$, $1\le i\le k$. Hence,
\[
w^{i}(x)=Q^{i}\left(\frac{2(x-x_{i})}{\bar{b}+\sqrt{\bar{b}^{2}+4a}}\right),\quad1\le i\le k,x_{i}\in\R^{3}
\]
give all the solutions to the system (\ref{eq: our limit system}).
So follows the uniqueness result in Proposition \ref{prop: uniq and nondege}.

Since we have proved that $c$ is a solution-independent positive
constant, the nondegeneracy of $w^{i}$ can be proved by the same
argument as that of Li et al. \cite{LLPWX-2017}. We omit the details.
\end{proof}

Note that since $Q^{i}(x)$ decays exponentially at infinity, we infer
that
\begin{equation}
\max_{1\le i\le k}\left(w^{i}(x)+|\na w^{i}(x)|\right)=O(e^{-\si|x|})\label{eq: exponential decay}
\end{equation}
for some $\si>0$.

Now we can prove Proposition \ref{prop: the form of multi-peak solutions}.

\begin{proof}[Proof of Proposition \ref{prop: the form of multi-peak solutions}]First
recall that, in the case of Schr\"odinger equations (i.e., $b=0$),
if $u_{\ep}$ is a multi-peak solution, then $u_{\ep}$ must be of
the form
\[
u_{\ep}(x)=\sum_{i=1}^{k}U_{\ep,y_{\ep}^{i}}^{i}+\var_{\ep},
\]
where $U^{i}\in H^{1}(\R^{3})$ is the unique positive radial solution
to the equation
\begin{eqnarray*}
-a\De v+V(a_{i})v=v^{p}, & v>0 & \text{in }\R^{3},
\end{eqnarray*}
and $y_{\ep}^{i}$, $\var_{\ep}$ satisfy the listed properties in
Proposition \ref{prop: the form of multi-peak solutions}.

In our case, suppose $u_{\ep}$ is a multi-peak solution to Eq. (\ref{eq: Kirchhoff})
with local maximum points $y_{\ep}^{i}$ ($1\le i\le k$). It is direct
to verify that, for each $1\le i\le k$, $\bar{u}_{\ep}(x)\equiv u_{\epsilon}(\ep x+y_{\ep}^{i})$
is a uniformly bounded sequence in $H^{1}(\R^{n})$ with respect to
$\ep$ and satisfies
\[
-\left(a+b\int|\na\bar{u}_{\ep}|^{2}\right)\De\bar{u}_{\ep}+V(\ep x+y_{\ep}^{i})\bar{u}_{\ep}=\bar{u}_{\ep}^{p}
\]
in $\R^{3}$. So, there exists a subsequence $\ep_{l}\to0$ such that
$\bar{u}_{l}(x)\equiv u_{\ep_{l}}(\ep_{l}x+y_{\ep_{l}}^{i})$ converges
weakly to a function $w^{i}$ in $H^{1}(\R^{3})$ and
\[
a+b\int|\na\bar{u}_{l}|^{2}\to A
\]
as $l\to\wq$ for some constant $A>0$. Then, $w^{i}$ must satisfy
the Schr\"odinger equation
\begin{eqnarray*}
-A\De w^{i}+V(a_{i})w^{i}=(w^{i})^{p} &  & \text{in }\R^{3}.
\end{eqnarray*}
Note that $x=0$ is a maximum point of $w^{i}$. Hence $w^{i}(x)=w^{i}(|x|)$
must be the unique positive radial solution to the above equation.
Moreover, it is well known that $w^{i}(r)=w^{i}(|x|)$ is strictly
decreasing as $|x|\to\wq$. So we can use the same concentrating compactness
arguments as that of multi-peak solutions to Schr\"odinger equations,
to find that
\[
\bar{u}_{l}=\sum_{i=1}^{k}w^{i}\left(\left(x-y_{\ep_{l}}^{i}\right)/\ep_{l}\right)+\var_{\ep_{l}}
\]
 with $y_{\ep_{l}}^{i}$ and $\var_{\ep_{l}}$ satisfying the
properties mentioned in Proposition \ref{prop: the form of multi-peak solutions}.

Finally, noting that $|y_{\ep_{l}}^{i}-y_{\ep_{l}}^{j}|/\ep_{l}\to\wq$
implies that $w_{\ep_{l},y_{\ep_{l}}^{i}}^{i}$, $1\le i\le k$, are
mutually asymptotically orthogonal. That is,
\begin{eqnarray*}
\int\left(\na w_{\ep_{l},y_{\ep_{l}}^{i}}^{i}\cdot\na w_{\ep_{l},y_{\ep_{l}}^{j}}^{j}+w_{\ep_{l},y_{\ep_{l}}^{i}}^{i}\cdot w_{\ep_{l},y_{\ep_{l}}^{j}}^{j}\right)\to0 &  & \text{as }l\to\wq\text{ for }i\neq j.
\end{eqnarray*}
Hence, we deduce
\[
A=\lim_{l\to\wq}\left(a+b\int|\na\bar{u}_{l}|^{2}\right)=a+b\sum_{i=1}^{k}\int|\na w^{i}|^{2}.
\]
Thus, $w^{i}$ ($1\le i\le k$) satisfies the system (\ref{eq: our limit system}).
Then, Proposition \ref{prop: uniq and nondege} implies that the constant
$A$ is independent of the choice of the weak convergent sequence
$\{u_{\ep_{l}}\}$. This in turn means that the above analysis applies
to the whole sequence $\{u_{\ep}\}$. The proof of Proposition \ref{prop: the form of multi-peak solutions}
is complete.\end{proof}

Next we apply the following type of Pohozaev identity to locate multi-peak
solutions of Eq. \eqref{eq: Kirchhoff}.

\begin{proposition} \label{prop: Pohozaev identity}Let $u$ be a
positive solution of Eq. (\ref{eq: Kirchhoff}). Let $\Om$ be a bounded
smooth domain in $\R^{3}$. Then, for each $j=1,2,3$, there hold
\begin{equation}
\begin{aligned}\int_{\Om}\frac{\pa V}{\pa x_{j}}u^{2} & =\left(\ep^{2}a+\ep b\int_{\R^{3}}|\na u|^{2}\right)\int_{\pa\Om}\left(|\na u|^{2}\nu_{j}-2\frac{\pa u}{\pa\nu}\frac{\pa u}{\pa x_{j}}\right)\\
 & \quad+\int_{\pa\Om}Vu^{2}\nu_{j}-\frac{2}{p+1}\int_{\pa\Om}u^{p+1}\nu_{j}.
\end{aligned}
\label{eq: Pohozaev}
\end{equation}
Here $\nu=(\nu_{1},\nu_{2},\nu_{3})$ is the unit outward normal of
$\pa\Om$.\end{proposition}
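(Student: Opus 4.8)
The plan is to derive \eqref{eq: Pohozaev} by the classical Pohozaev computation, testing the equation against $\pa u/\pa x_{j}$ and integrating over $\Om$. The crucial observation is that, although \eqref{eq: Kirchhoff} is nonlocal, the coefficient
\[
A\equiv\ep^{2}a+\ep b\int_{\R^{3}}|\na u|^{2}
\]
is a positive constant independent of $x$, so the equation may be written as $-A\De u+V(x)u=u^{p}$ and the derivation reduces to the standard one for a semilinear Schr\"odinger-type equation, with $A$ merely carried along as a fixed coefficient. By interior elliptic regularity the bounded solution $u$ is of class $C^{2}$, so all the manipulations below are legitimate on the smooth bounded domain $\Om$; we tacitly assume that $V$ admits the weak derivative $\pa V/\pa x_{j}$ on $\Om$, which is what makes the interior term $\int_{\Om}(\pa V/\pa x_{j})u^{2}$ meaningful.

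First I would multiply $-A\De u+V(x)u=u^{p}$ by $\pa u/\pa x_{j}$ and integrate over $\Om$. For the Laplacian term, integrating by parts once (Green's first identity) and using the pointwise identity $\na u\cdot\na(\pa_{j}u)=\tfrac12\pa_{j}|\na u|^{2}$ together with the divergence theorem yields
\[
\int_{\Om}(-A\De u)\frac{\pa u}{\pa x_{j}}=\frac{A}{2}\int_{\pa\Om}|\na u|^{2}\nu_{j}-A\int_{\pa\Om}\frac{\pa u}{\pa\nu}\frac{\pa u}{\pa x_{j}}.
\]
For the potential term I would write $Vu\,\pa_{j}u=\tfrac12 V\pa_{j}(u^{2})$ and integrate by parts to get $-\tfrac12\int_{\Om}(\pa_{j}V)u^{2}+\tfrac12\int_{\pa\Om}Vu^{2}\nu_{j}$; for the right-hand side, $u^{p}\pa_{j}u=\tfrac{1}{p+1}\pa_{j}(u^{p+1})$ gives, after one more divergence theorem, $\tfrac{1}{p+1}\int_{\pa\Om}u^{p+1}\nu_{j}$.

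Finally I would collect the three contributions, isolate the single interior term $\tfrac12\int_{\Om}(\pa_{j}V)u^{2}$ on one side, and multiply through by $2$; the boundary terms then combine into precisely the right-hand side of \eqref{eq: Pohozaev}, with $A$ restored to its value $\ep^{2}a+\ep b\int_{\R^{3}}|\na u|^{2}$. There is no genuine obstacle in this argument: the only points needing care are the regularity of $u$ required to justify the integrations by parts (supplied by standard interior estimates, so that $u\in C^{2}$ and the boundary integrals over the smooth $\pa\Om$ make sense) and the simple but essential observation that the nonlocal term contributes nothing extra exactly because $A$ does not depend on $x$. In other words, the nonlocality that is so delicate elsewhere in the paper is entirely harmless for this identity, since it only rescales the coefficient of $\De u$ by a constant.
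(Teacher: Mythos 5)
Your proposal is correct and follows exactly the route the paper indicates: multiply Eq.~\eqref{eq: Kirchhoff} by $\pa u/\pa x_{j}$, integrate by parts over $\Om$, and exploit the fact that the nonlocal coefficient $\ep^{2}a+\ep b\int_{\R^{3}}|\na u|^{2}$ is an $x$-independent constant, so the computation reduces to the classical local one. The paper omits the details (referring to Cao--Li--Luo), and your write-up supplies precisely those details with the correct bookkeeping of the three boundary terms.
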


The proof is obtained by multiplying both sides of Eq. (\ref{eq: Kirchhoff})
by $\pa_{x_{j}}u$ for each $1\le j\le3$ and then integrating
by parts. We omit the details, see Cao-Li-Luo \cite{Cao-Li-Luo-2015}.

\begin{lemma}
 Suppose $V$ satisfies (V1) and $V\in C^{1}(\R^{3})$.
Let $u_{\ep}=\sum_{i=1}^{k}w_{\ep,y_{\ep}^{i}}^{i}+\var_{\ep}$ be
a multi-peak solution to Eq. (\ref{eq: Kirchhoff}) given by Proposition
\ref{prop: the form of multi-peak solutions}. Then $\na V(a_{i})=0$
for each $i=1,\ldots,k$.
\end{lemma}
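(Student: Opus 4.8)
The plan is to apply the local Pohozaev identity of Proposition~\ref{prop: Pohozaev identity} on the fixed ball $\Om=B_{\de}(y_{\ep}^{i})$, where $\de$ is chosen with $0<\de<R_{0}$ so that, for $\ep$ small, this ball captures only the $i$-th peak while its boundary stays at a fixed positive distance from every $y_{\ep}^{j}$. The identity then reads, for each $j=1,2,3$,
\begin{equation*}
\int_{B_{\de}(y_{\ep}^{i})}\frac{\pa V}{\pa x_{j}}u_{\ep}^{2}=\Big(\ep^{2}a+\ep b\int_{\R^{3}}|\na u_{\ep}|^{2}\Big)\int_{\pa B_{\de}}\Big(|\na u_{\ep}|^{2}\nu_{j}-2\frac{\pa u_{\ep}}{\pa\nu}\frac{\pa u_{\ep}}{\pa x_{j}}\Big)+\int_{\pa B_{\de}}Vu_{\ep}^{2}\nu_{j}-\frac{2}{p+1}\int_{\pa B_{\de}}u_{\ep}^{p+1}\nu_{j}.
\end{equation*}
The strategy is to show that the right-hand side is $o(\ep^{3})$ while the left-hand side equals $\ep^{3}\pa_{x_{j}}V(a_{i})\|w^{i}\|_{L^{2}(\R^{3})}^{2}+o(\ep^{3})$; since $\|w^{i}\|_{L^{2}}^{2}>0$, this forces $\pa_{x_{j}}V(a_{i})=0$ for every $j$, i.e. $\na V(a_{i})=0$.

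For the left-hand side I would substitute $x=y_{\ep}^{i}+\ep z$. Expanding $u_{\ep}^{2}$ and keeping the leading block $(w_{\ep,y_{\ep}^{i}}^{i})^{2}$, the scaling gives $\ep^{3}\int_{\R^{3}}\pa_{x_{j}}V(y_{\ep}^{i}+\ep z)\,(w^{i}(z))^{2}\,\D z$, which converges to $\ep^{3}\pa_{x_{j}}V(a_{i})\|w^{i}\|_{L^{2}}^{2}$ because $V\in C^{1}$, $y_{\ep}^{i}\to a_{i}$ and $w^{i}$ decays exponentially by \eqref{eq: exponential decay}. The interaction terms $w_{\ep,y_{\ep}^{i}}^{i}w_{\ep,y_{\ep}^{j}}^{j}$ and the squares $(w_{\ep,y_{\ep}^{j}}^{j})^{2}$ for $j\ne i$ are exponentially small on $B_{\de}(y_{\ep}^{i})$, again by \eqref{eq: exponential decay} together with the lower bound on $|y_{\ep}^{i}-y_{\ep}^{j}|$. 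The remainder contributions $\int\pa_{x_{j}}V\,w_{\ep,y_{\ep}^{i}}^{i}\var_{\ep}$ and $\int\pa_{x_{j}}V\,\var_{\ep}^{2}$ are handled by Cauchy--Schwarz with the $\ep$-Sobolev inequality \eqref{eq: epsilon-Sobolev inequality} at $q=2$ and the bound $\|\var_{\ep}\|_{\ep}=o(\ep^{3/2})$ from Proposition~\ref{prop: the form of multi-peak solutions}: since $\|w_{\ep,y_{\ep}^{i}}^{i}\|_{L^{2}}=O(\ep^{3/2})$ and $\|\var_{\ep}\|_{L^{2}}\le C\|\var_{\ep}\|_{\ep}=o(\ep^{3/2})$, both are $o(\ep^{3})$. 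This yields the claimed leading order.

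For the right-hand side the point is that the integrand is evaluated on $\pa B_{\de}$, a fixed distance away from all peaks. The coefficient $\ep^{2}a+\ep b\int_{\R^{3}}|\na u_{\ep}|^{2}$ is $O(\ep^{2})$ by part (iii) of Definition~\ref{def: multi-peak}, which gives $\int|\na u_{\ep}|^{2}=O(\ep)$. Thus it suffices to establish pointwise exponential decay of the solution away from its peaks, namely $u_{\ep}(x)+\ep|\na u_{\ep}(x)|\le Ce^{-\si\,\mathrm{dist}(x,\{y_{\ep}^{1},\dots,y_{\ep}^{k}\})/\ep}$. With this in hand every boundary integral is $O(e^{-c/\ep})=o(\ep^{3})$, and the conclusion follows. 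I expect this decay estimate to be the main obstacle: the building blocks $w^{i}$ decay by \eqref{eq: exponential decay}, but the remainder $\var_{\ep}$ is controlled only in the $\|\cdot\|_{\ep}$-norm, so a pointwise bound for $\var_{\ep}$ (equivalently for $u_{\ep}$) on $\pa B_{\de}$ must be produced separately. I would obtain it by a comparison/barrier argument: by property (ii) one may fix $\tau$ so small that $u_{\ep}^{p-1}\le\tfrac12\inf_{\R^{3}}V$ outside $\cup_{j}B_{R\ep}(y_{\ep}^{j})$, so that $u_{\ep}$ satisfies $-c_{\ep}\De u_{\ep}+\tfrac12(\inf V)u_{\ep}\le0$ there with $c_{\ep}\sim\ep^{2}$; comparing with the barrier $e^{-\si|x-y_{\ep}^{j}|/\ep}$ gives the pointwise decay of $u_{\ep}$, and standard interior elliptic estimates upgrade it to the gradient.
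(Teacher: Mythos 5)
Your proposal is correct in substance and reaches the same conclusion, but it diverges from the paper at the one genuinely delicate point: how to control the boundary integrals that involve the remainder $\var_{\ep}$. The paper does \emph{not} prove any pointwise decay for $u_{\ep}$ or $\var_{\ep}$. Instead it sets $f=\ep^{2}|\na\var_{\ep}|^{2}+|\var_{\ep}|^{2}+|\var_{\ep}|^{p+1}$, notes that $\int_{B_{r_{0}}(a_{1})}f=o(\ep^{3})$ follows directly from $\|\var_{\ep}\|_{\ep}=o(\ep^{3/2})$ and the $\ep$-Sobolev inequality \eqref{eq: epsilon-Sobolev inequality}, and then uses the co-area identity $\int_{0}^{r_{0}}\bigl(\int_{\pa B_{r}(a_{1})}f\bigr)\D r=\int_{B_{r_{0}}(a_{1})}f$ to select a \emph{good radius} $r$ for which $\int_{\pa B_{r}(a_{1})}f=o(\ep^{3})$; the Pohozaev identity is then applied on that particular sphere, where the exponential decay \eqref{eq: exponential decay} of the $w^{i}$ handles the remaining boundary contributions. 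Your route instead fixes the ball and upgrades the integral control of $\var_{\ep}$ to a pointwise bound $u_{\ep}(x)+\ep|\na u_{\ep}(x)|\le Ce^{-\si\,\mathrm{dist}(x,\{y_{\ep}^{j}\})/\ep}$ via a barrier built from property (ii) of Definition \ref{def: multi-peak}; this is a standard and legitimate argument (the supersolution comparison works because the coefficient $\ep^{2}a+\ep b\int|\na u_{\ep}|^{2}$ is pinched between $a\ep^{2}$ and $A\ep^{2}$, and the gradient bound follows from interior elliptic estimates after rescaling $x\mapsto y_{\ep}^{j}+\ep z$), but it is noticeably heavier machinery than the averaging trick, and you correctly flag it as the main obstacle. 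What your approach buys is stronger (pointwise, any-radius) information and a clean direct identification of the leading term $\ep^{3}\pa_{x_{j}}V(a_{i})\|w^{i}\|_{L^{2}}^{2}$ rather than the paper's contradiction-style lower bound; what the paper's approach buys is brevity, since it never leaves the energy-norm framework in which $\var_{\ep}$ is naturally controlled. Your treatment of the left-hand side (rescaling, dominated convergence using $V\in C^{1}$, Cauchy--Schwarz for the cross and quadratic $\var_{\ep}$ terms) matches the paper's estimates of $J_{111}$, $J_{112}$ in spirit and is sound.
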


Note that as a consequence of the above lemma, we find that if Eq.
(\ref{eq: Kirchhoff}) has a concentrating solution, then $V$ must
have at least one critical point.
\begin{proof}
We only prove the result for $i=1$. We use a contradiction argument.
Assume, with no loss of generality, that
\begin{equation}
|V_{x_{1}}(a_{1})|=C_{0}>0.\label{eq: contradiction assumptoin}
\end{equation}
We will apply the Pohozaev identity to $u_{\ep}$ with $\Om=B_{r}(a_{1})$
to deduce the contradiction.

We choose the radius $r$ as follows. Let $r_{0}\equiv\min_{i\ne1}\{1,|y_{i}-y_{1}|/10\}$.
By (\ref{eq: epsilon-Sobolev inequality}) and using the assumption
$\|\var_{\ep}\|_{\ep}=o(\ep^{3/2})$, we have
\[
\|\var_{\ep}\|_{L^{p+1}(\R^{3})}\le C\ep^{\frac{3}{p+1}-\frac{3}{2}}\|\var_{\ep}\|_{\ep}=o(\ep^{3/(p+1)}).
\]
Set $f=\ep^{2}|\na\var_{\ep}|^{2}+|\var_{\ep}|^{2}+|\var_{\ep}|^{p+1}$.
Using polar coordinates, $\int_{0}^{r_{0}}\int_{\pa B_{r}(a_{1})}f=\int_{B_{r_{0}}(a_{1})}f$,
we can choose $r\in(0,r_{0})$ such that
\begin{equation}
\int_{\pa B_{r}(a_{1})}\left(\ep^{2}|\na\var_{\ep}|^{2}+|\var_{\ep}|^{2}+|\var_{\ep}|^{p+1}\right)=o(\ep^{3}).\label{eq: choosing r}
\end{equation}

Now we apply the Pohozaev identity to $u_{\ep}$ with $\Om=B_{r}(a_{1})$
with $r$ being chosen in the above. We obtain
\begin{equation}
\begin{aligned}\int_{B_{r}(a_{1})}\frac{\pa V}{\pa x_{1}}u_{\ep}^{2} & =\ep_{1}^{2}\int_{\pa B_{r}(a_{1})}\left(|\na u_{\ep}|^{2}\nu_{1}-2\frac{\pa u_{\ep}}{\pa\nu}\frac{\pa u_{\ep}}{\pa x_{1}}\right)\\
 & \quad+\int_{\pa B_{r}(a_{1})}Vu_{\ep}^{2}\nu_{1}-\frac{2}{p+1}\int_{\pa B_{r}(a_{1})}u_{\ep}^{p+1}\nu_{1},
\end{aligned}
\label{eq: 2.2}
\end{equation}
where
\[
\ep_{1}^{2}=\ep^{2}a+\ep b\int_{\R^{3}}|\na u_{\ep}|^{2}=O(\ep^{2}).
\]

We estimate (\ref{eq: 2.2}) term by term. To estimate $\int_{B_{r}(a_{1})}\frac{\pa V}{\pa x_{1}}u_{\ep}^{2}$,
split into
\begin{equation}
\begin{aligned}\int_{B_{r}(a_{1})}\frac{\pa V}{\pa x_{1}}u_{\ep}^{2} & =\int_{B_{r}(a_{1})}\left(V_{x_{1}}(x)-V_{x_{1}}(a_{1})\right)u_{\ep}^{2}+V_{x_{1}}(a_{1})\int_{B_{r}(a_{1})}u_{\ep}^{2}.\end{aligned}
\label{eq: Pohozaev identity applied}
\end{equation}
By continuity, we have
\[
\begin{aligned}\left|\int_{B_{r}(a_{1})}\left(V_{x_{1}}(x)-V_{x_{1}}(a_{1})\right)u_{\ep}^{2}\right| & \le\max_{x\in B_{r}(a_{1})}\left|V_{x_{1}}(x)-V_{x_{1}}(a_{1})\right|\int_{B_{r}(a_{1})}u_{\ep}^{2}\end{aligned}
.
\]
By (\ref{eq: exponential decay}), there exists a constant $\ga>0$
which is independent of $\ep$ such that for $i\neq j$
\begin{equation}
\int_{\R^{3}}w_{\ep,y_{\ep}^{i}}^{i}w_{\ep,y_{\ep}^{j}}^{j}=O\left(\ep^{3}e^{-\frac{\ga|y_{i}-y_{j}|}{\ep}}\right).\label{eq: integral estimates}
\end{equation}
Noting that $|a_{i}-a_{1}|>2r$ for each $i\neq1$, using the above
estimates and the assumption $\|\var_{\ep}\|_{\ep}=o(\ep^{3/2})$,
we deduce
\[
C_{1}\ep^{3}\le\int_{B_{r}(a_{1})}u_{\ep}^{2}=\int_{B_{r}(a_{1})}\left(w_{\ep,y_{\ep}^{1}}^{1}\right)^{2}+o(\ep^{3})\le C_{2}\ep^{3}
\]
for $\ep$ sufficiently small, where $C_{1},C_{2}>0$ are independent
of $\ep$. Hence, for $\ep$ sufficiently small, there holds
\begin{equation}
\begin{aligned}\left|\int_{B_{r}(a_{1})}\left(V_{x_{1}}(x)-V_{x_{1}}(a_{1})\right)u_{\ep}^{2}\right| & \le C_{2}\max_{x\in B_{r}(a_{1})}\left|V_{x_{1}}(x)-V_{x_{1}}(a_{1})\right|\ep^{3}\end{aligned}
\label{eq:constant C1}
\end{equation}
and
\begin{equation}
|V_{x_{1}}(a_{1})|\int_{B_{r}(a_{1})}u_{\ep}^{2}\ge C_{0}C_{1}\ep^{3}.\label{eq: constant C2}
\end{equation}
Combining the above two estimates and choosing $r$ sufficiently small,
we obtain
\begin{equation}
\left|\int_{B_{r}(a_{1})}\frac{\pa V}{\pa x_{1}}u_{\ep}^{2}\right|\ge\left(C_{0}C_{1}-C_{2}\max_{x\in B_{r}(a_{1})}\left|V_{x_{1}}(x)-V_{x_{1}}(a_{1})\right|\right)\ep^{3}\ge\frac{C_{0}C_{1}}{2}\ep^{3}.\label{eq: estimate for the LHS}
\end{equation}

On the other hand, we have
\begin{equation}
\begin{aligned}I_{2} & \equiv\ep_{1}^{2}\left|\int_{\pa B_{r}(a_{1})}\left(|\na u_{\ep}|^{2}\nu_{i}-2\frac{\pa u_{\ep}}{\pa\nu}\frac{\pa u_{\ep}}{\pa x_{1}}\right)\right|\\
 & \le C\ep^{2}\int_{\pa B_{r}(a_{1})}\left(\sum_{i=1}^{k}\left|\na w_{\ep,y_{\ep}^{i}}^{i}\right|^{2}+|\na\var_{\ep}|^{2}\right)\\
 & \le C\ep^{2}\left(O(\ep^{-\ga/\ep})+o(\ep)\right)=o(\ep^{3}),
\end{aligned}
\label{eq: estimate for I2}
\end{equation}
and
\begin{equation}
\begin{aligned}I_{3} & \equiv\left|\int_{\pa B_{r}(a_{1})}Vu_{\ep}^{2}\nu_{1}-\frac{2}{p+1}\int_{\pa B_{r}(a_{1})}u_{\ep}^{p+1}\nu_{1}\right|\\
 & \le C\int_{\pa B_{r}(a_{1})}\left(\sum_{i=1}^{k}\left(w_{\ep,y_{\ep}^{i}}^{i}\right)^{2}+|\var_{\ep}|^{2}+\sum_{i=1}^{k}\left(w_{\ep,y_{\ep}^{i}}^{i}\right)^{p+1}+|\var_{\ep}|^{p+1}\right)\\
 & =O(\ep^{-\ga/\ep})+o(\ep^{3})=o(\ep^{3}).
\end{aligned}
\label{eq: estimates for I3}
\end{equation}
In both (\ref{eq: estimate for I2}) (\ref{eq: estimates for I3}),
we have used (\ref{eq: choosing r}) and the exponential decay of
$w^{i}$ at infinity.

Finally, combining (\ref{eq: Pohozaev identity applied}) (\ref{eq: estimate for the LHS})
(\ref{eq: estimate for I2}) (\ref{eq: estimates for I3}) we obtain
\begin{eqnarray*}
\frac{C_{0}C_{1}}{2}\ep^{3}\le o(\ep^{3}), &  & \text{as }\ep\to0.
\end{eqnarray*}
We reach a contradiction. The proof is complete.
\end{proof}

\section{Preliminary estimates}

To obtain multi-peak solutions to Eq. (\ref{eq: Kirchhoff}), Proposition
\ref{prop: the form of multi-peak solutions} inspires us to construct
solutions of the form (\ref{3.1-1}). To this end, let $(w^{1},\cdots,w^{k})$
be the unique positive radial solution to the system (\ref{eq: our limit system})
and let
\begin{eqnarray*}
Y=(y^{1},\cdots,y^{k}) & \text{and} & W_{\ep,Y}=\sum_{i=1}^{k}w_{\ep,y^{i}}^{i}.
\end{eqnarray*}
Recall that we assume $a_{i}\neq a_{j}$ for $i\neq j$ in this paper.
Let $0<\de<\min\{|a_{i}-a_{j}|/4:i\neq j\}$ and denote
\[
D_{\de}=\bar{B}_{\de}(a_{1})\times\cdots\times\bar{B}_{\de}(a_{k}).
\]
Note that if $(y^{1},\cdots,y^{k})\in D_{\de}$, then $|y^{j}-y^{j}|\ge|a_{i}-a_{j}|/2\ge2\de$
with $i\ne j$, which implies by (\ref{eq: exponential decay}) that
\begin{eqnarray}
\int\na w_{\ep,y^{i}}^{i}\cdot\na w_{\ep,y^{j}}^{j}+\left(w_{\ep,y^{i}}^{i}\right)^{q}\left(w_{\ep,y^{j}}^{j}\right)^{r}=O(e^{-\ga/\ep}) &  & \text{with }i\ne j\label{eq: small interaction-1}
\end{eqnarray}
for some constant $\ga>0$ for any given $q,r>0$.

To construct solutions to Eq. (\ref{eq: Kirchhoff}) in the form (\ref{3.1-1}),
we will follow the scheme of Cao and Peng \cite{Cao-Peng-2009}, combining
reduction method and variational method. First, define
\[
J_{\ep}(Y,\var)=I_{\ep}(W_{\ep,Y}+\var)
\]
for $Y=(y^{1},\cdots,y^{k})\in\R^{3k}$ and $\var\in H_{\ep}$. Then,
introduce operators $l_{\ep}$, $\L_{\ep}$ and $R_{\ep}$ as follows:
for $\var,\psi\in H_{\ep}$, define
\begin{equation}
\begin{aligned}l_{\ep}(\var) & =\langle I_{\ep}^{\prime}(W_{\ep,Y}),\var\rangle\\
 & =\langle W_{\ep,Y},\var\rangle_{\ep}+\ep b\left(\int|\na W_{\ep,Y}|^{2}\right)\int\na W_{\ep,Y}\cdot\na\var-\int W_{\ep,Y}^{p}\var,
\end{aligned}
\label{eq: def of l-epsilon}
\end{equation}
and
\begin{equation}
\begin{aligned}\langle\L_{\ep}\var,\psi\rangle & =\langle I_{\ep}^{\prime\prime}(W_{\ep,Y})[\var],\psi\rangle\\
 & =\langle\var,\psi\rangle_{\ep}+\ep b\left(\int|\na W_{\ep,Y}|^{2}\right)\int\na\var\cdot\na\psi\\
 & \quad+2\ep b\left(\int\na W_{\ep,Y}\cdot\na\var\right)\left(\int\na W_{\ep,Y}\cdot\na\psi\right)-p\int W_{\ep,Y}^{p-1}\var\psi,
\end{aligned}
\label{eq: def of biliear L-epsilon}
\end{equation}
and
\begin{equation}
R_{\ep}(\var)=J_{\ep}(Y,\var)-J_{\ep}(Y,0)-l_{\ep}(\var)-\frac{1}{2}\langle\L_{\ep}\var,\var\rangle.\label{eq: 2rd order reminder term}
\end{equation}
Note that $R_{\ep}$ belongs to $C^{2}(H_{\ep})$ since so is every
term in the right hand side of (\ref{eq: 2rd order reminder term}).
In this way, we have expansion
\[
J_{\ep}(Y,\var)=J_{\ep}(Y,0)+l_{\ep}(\var)+\frac{1}{2}\langle\L_{\ep}\var,\var\rangle+R_{\ep}(\var),
\]
where $J_{\ep}(Y,0)=I_{\ep}(W_{\ep,Y})$.

For every $\ep,\de>0$ sufficiently small and for every fixed $Y\in D_{\de}$,
we will prove that $J_{\ep}(Y,\cdot):E_{\ep,Y}\to E_{\ep,Y}$ has
a unique critical point $\var_{\ep,Y}\in E_{\ep,Y}$, where
\[
E_{\ep,Y}=\left\{ u\in H_{\ep}:\left\langle u,\pa_{y_{j}^{i}}w_{\ep,y^{i}}^{i}\right\rangle _{\ep}=0\text{ for }i=1,\ldots,k,\,\,j=1,2,3\right\} .
\]
Then, for each $\ep$,$\de$ sufficiently small, we will find a critical
point $Y_{\ep}$ for the function $j_{\ep}:D_{\de}\to\R$ induced
by
\begin{equation}
Y\mapsto j_{\ep}(Y)\equiv J_{\ep}(Y,\var_{\ep,Y}).\label{eq: reduction function}
\end{equation}
This gives a solution $u_{\ep}\equiv W_{\ep,Y_{\ep}}+\var_{\ep,Y_{\ep}}$
to Eq. (\ref{eq: Kirchhoff}) in virtue of the following lemma, which
can be proved in a standard way, see e.g. Bartsch and Peng \cite{Bartsch-Peng-2007}
and Cao and Peng \cite{Cao-Peng-2009}. We leave the details for the
interested readers.

\begin{lemma}\label{lem: reduction} There exist $\ep_{0}>0$, $\de_{0}>0$
satisfying the following property: for any $\ep\in(0,\ep_{0})$ and
$\de\in(0,\de_{0})$, $Y_{\ep}\in D_{\de}$ is a critical point of
the function $j_{\ep}$ define as in (\ref{eq: reduction function})
if and only if
\[
u_{\ep}\equiv W_{\ep,Y_{\ep}}+\var_{\ep,Y_{\ep}}
\]
 is a critical point of $I_{\ep}$.\end{lemma}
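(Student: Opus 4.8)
The plan is to prove the equivalence by differentiating the reduced functional $j_\ep$ and using the variational characterization of $\var_{\ep,Y}$, the only delicate point being the sharp $\ep$-order bookkeeping forced by the nonlocal term. First I would record that, for $\ep,\de$ small and $Y\in D_\de$ fixed, the critical point $\var_{\ep,Y}\in E_{\ep,Y}$ produced for $J_\ep(Y,\cdot)$ depends in a $C^1$ fashion on $Y$; this follows from the implicit function theorem, since $\L_\ep$ defined in \eqref{eq: def of biliear L-epsilon} is uniformly invertible on $E_{\ep,Y}$ (after the standard $\ep$-rescaling) and the remainder $R_\ep$ in \eqref{eq: 2rd order reminder term} is of class $C^2$. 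By construction $\var_{\ep,Y}$ annihilates the $E_{\ep,Y}$-projection of $I_\ep'(W_{\ep,Y}+\var_{\ep,Y})$, so there are Lagrange multipliers $c_m^l=c_m^l(Y)$ with
\[
\big\langle I_\ep'(W_{\ep,Y}+\var_{\ep,Y}),\psi\big\rangle=\sum_{l=1}^{k}\sum_{m=1}^{3}c_m^l\big\langle \pa_{y_m^l}w_{\ep,y^l}^l,\psi\big\rangle_\ep,\qquad \psi\in H_\ep.
\]

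Next I would differentiate $j_\ep(Y)=I_\ep(u_\ep)$ with $u_\ep=W_{\ep,Y}+\var_{\ep,Y}$. Using $\pa_{y_j^i}W_{\ep,Y}=\pa_{y_j^i}w_{\ep,y^i}^i$, the chain rule gives
\[
\pa_{y_j^i}j_\ep(Y)=\big\langle I_\ep'(u_\ep),\ \pa_{y_j^i}w_{\ep,y^i}^i+\pa_{y_j^i}\var_{\ep,Y}\big\rangle,
\]
into which I substitute the Lagrange representation above. The term pairing against $\pa_{y_j^i}\var_{\ep,Y}$ cannot be dropped since $E_{\ep,Y}$ itself varies with $Y$; instead I would differentiate the constraint $\langle \var_{\ep,Y},\pa_{y_m^l}w_{\ep,y^l}^l\rangle_\ep=0$ in $Y$, trading $\langle \pa_{y_m^l}w_{\ep,y^l}^l,\pa_{y_j^i}\var_{\ep,Y}\rangle_\ep$ for $-\langle \var_{\ep,Y},\pa_{y_j^i}\pa_{y_m^l}w_{\ep,y^l}^l\rangle_\ep$. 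This realizes $\pa_Y j_\ep=M_\ep(Y)^\top c$ for a $3k\times 3k$ matrix $M_\ep(Y)$ whose entries are $\langle \pa_{y_m^l}w_{\ep,y^l}^l,\pa_{y_j^i}w_{\ep,y^i}^i\rangle_\ep$ corrected by the above $\var$-terms.

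The heart of the argument, and the step I expect to be the main obstacle, is showing that $M_\ep(Y)$ is invertible for $\ep$ small, uniformly in $Y\in D_\de$. Here the $\ep$-scalings enter: one has $\|\pa_{y_j^i}w_{\ep,y^i}^i\|_\ep=O(\ep^{1/2})$ while $\|\pa_{y_j^i}\pa_{y_m^l}w_{\ep,y^l}^l\|_\ep=O(\ep^{-1/2})$, so by $\|\var_{\ep,Y}\|_\ep=o(\ep^{3/2})$ the constraint corrections are $o(\ep)$. The diagonal inner products $\langle \pa_{y_m^i}w_{\ep,y^i}^i,\pa_{y_j^i}w_{\ep,y^i}^i\rangle_\ep$ equal $\delta_{mj}\,c_i\,\ep+o(\ep)$ with $c_i>0$ by the radial symmetry of $w^i$, while the cross blocks $l\ne i$ are $O(e^{-\ga/\ep})$ by \eqref{eq: small interaction-1}. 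Thus $\ep^{-1}M_\ep(Y)\to \mathrm{diag}(c_1 I_3,\dots,c_k I_3)$, which is nonsingular. The difficulty is precisely verifying that the extra nonlocal pieces $\ep b\big(\int|\na u|^2\big)\int\na u\cdot\na\psi$ carried by $I_\ep'$ (and the corresponding terms in $I_\ep''$) do not spoil this leading $\ep$-order, which is the careful order-in-$\ep$ bookkeeping flagged in the introduction.

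Finally, the invertibility of $M_\ep(Y)$ closes both implications. If $Y_\ep$ is a critical point of $j_\ep$, then $M_\ep(Y_\ep)^\top c(Y_\ep)=0$ forces $c(Y_\ep)=0$, whence $I_\ep'(u_\ep)=0$ and $u_\ep$ is a critical point of $I_\ep$. Conversely, if $u_\ep$ is a critical point of $I_\ep$ then $I_\ep'(u_\ep)=0$ gives $c(Y_\ep)=0$ and hence $\pa_Y j_\ep(Y_\ep)=0$ at once. This yields the stated equivalence, and taking $\ep_0,\de_0$ small enough for all the above estimates completes the proof.
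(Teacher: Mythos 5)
Your argument is correct and is exactly the standard Lagrange-multiplier computation that the paper itself defers to (the paper gives no proof of this lemma, citing Bartsch--Peng and Cao--Peng for the ``standard way''): represent $I_{\ep}'(u_{\ep})$ in the $\langle\cdot,\cdot\rangle_{\ep}$-orthogonal complement of $E_{\ep,Y}$, differentiate the constraint, and invert the matrix $M_{\ep}(Y)$ whose rescaled limit is a positive diagonal matrix. One small remark: the step you flag as the main obstacle is actually harmless, since $M_{\ep}(Y)$ is built only from the inner products $\langle\pa_{y_{m}^{l}}w_{\ep,y^{l}}^{l},\cdot\rangle_{\ep}$ appearing in the multiplier identity, so the nonlocal terms of $I_{\ep}''$ never enter it; the only care needed is that the correction $\langle\var_{\ep,Y},\pa_{y_{j}^{i}}\pa_{y_{m}^{i}}w_{\ep,y^{i}}^{i}\rangle_{\ep}$ is $O(C(\de)\,\ep)$ with $C(\de)$ small by \eqref{eq: a simple estimate}, which is why both $\ep_{0}$ and $\de_{0}$ must be taken small.
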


In the below we deduce some necessary estimates for later use.

\begin{lemma}\label{lem: estimate for the first order} Assume that
$\ensuremath{V}$ satisfies (V1) (V2). Then, there exists a constant
$\ensuremath{C>0}$, independent of $\ensuremath{\ep,\de}$, such
that for any $\ensuremath{Y\in D_{\delta}}$ there holds
\[
|l_{\ep}(\var)|\le C\ep^{\frac{3}{2}}\left(\ep^{\theta}+\sum_{j=1}^{k}\left(V(y_{\epsilon}^{j})-V(a_{j})\right)\right)\|\var\|_{\ep}
\]
 for $\ensuremath{\var\in H_{\ep}}$. Here, $\ensuremath{\theta}$
denotes the order of H\"older continuity of $\ensuremath{V}$ in
the neighborhood of $\ensuremath{a_{i}}$, $1\le i\le k$. \end{lemma}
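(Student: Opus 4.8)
The goal is to estimate the linear functional $l_\ep(\var) = \langle I_\ep'(W_{\ep,Y}), \var\rangle$, which measures how far the approximate solution $W_{\ep,Y} = \sum_{i=1}^k w^i_{\ep,y^i}$ is from being an exact solution. The key observation is that each $w^i$ solves the \emph{system} \eqref{eq: our limit system} with the coefficient $c = a + b\sum_{l=1}^k \int|\na w^l|^2$, so after rescaling we know exactly which equation $W_{\ep,Y}$ satisfies approximately.

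The plan is to rewrite $l_\ep(\var)$ by substituting the defining relation \eqref{eq: def of l-epsilon} and subtracting off the equations that the individual bumps satisfy. First I would use the scaling relation from \eqref{3.2}-type identities: each $w^i_{\ep,y^i}$ satisfies $-\left(\ep^2 a + \ep b \sum_{l}\int|\na w^l_{\ep,y^l}|^2\right)\De w^i_{\ep,y^i} + V(a_i) w^i_{\ep,y^i} = (w^i_{\ep,y^i})^p$ in the weak sense (this is precisely the content of \eqref{eq: our limit system} after the $x\mapsto (x-y^i)/\ep$ substitution and using that $\ep b\int|\na w^i_{\ep,y^i}|^2 = \ep^3 b\int|\na w^i|^2$). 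Testing this against $\var$ and summing over $i$, I would cancel it against the corresponding terms in $l_\ep(\var)$. What survives falls into three groups: the potential-difference terms $\sum_i \int(V(x)-V(a_i)) w^i_{\ep,y^i}\var$; the nonlinear interaction term $\int\big(W_{\ep,Y}^p - \sum_i (w^i_{\ep,y^i})^p\big)\var$; and the cross terms in the nonlocal gradient integral coming from $\int|\na W_{\ep,Y}|^2 = \sum_i\int|\na w^i_{\ep,y^i}|^2 + \text{cross terms}$.

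I would estimate each group against $\|\var\|_\ep$ using the weighted Sobolev inequality \eqref{eq: epsilon-Sobolev inequality} and the exponential decay \eqref{eq: exponential decay}. For the potential term, I split $V(x)-V(a_i) = (V(x)-V(y^i)) + (V(y^i)-V(a_i))$; the first piece contributes $O(\ep^\theta)$ by the $C^\theta$ H\"older bound in (V2) combined with the rescaling $x = \ep z + y^i$ and the decay of $w^i$, while the second contributes the explicit $V(y^j_\ep)-V(a_j)$ terms after testing and applying Cauchy--Schwarz in the $\langle\cdot,\cdot\rangle_\ep$ inner product (each bump carries the factor $\ep^{3/2}$ in its $\ep$-norm). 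The nonlinear interaction term is controlled by the pointwise inequality $\big|(\sum_i u_i)^p - \sum_i u_i^p\big| \le C\sum_{i\ne j} u_i^{p-1} u_j$ (for $p>1$), and each mixed product is $O(e^{-\ga/\ep})$ by \eqref{eq: small interaction-1}, hence negligible against any power of $\ep$; the same bound handles the nonlocal cross terms, which are also exponentially small.

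The main obstacle is bookkeeping the orders of $\ep$ correctly through the rescalings, since the $\ep$-norm of a single bump is $\|w^i_{\ep,y^i}\|_\ep \sim \ep^{3/2}$ and the target estimate has exactly the prefactor $\ep^{3/2}$; every term must be shown to carry at least this factor, with the genuinely contributing pieces being the $O(\ep^\theta)$ H\"older remainder and the linear-in-$V$-difference terms, while all interaction and nonlocal cross terms must be verified to be exponentially small (hence absorbable). Care is needed to ensure the constant $C$ is uniform in $Y\in D_\de$, which follows because the bumps $w^i$ are fixed (solution-independent up to translation by Proposition \ref{prop: uniq and nondege}) and the decay rate $\si$ in \eqref{eq: exponential decay} is uniform.
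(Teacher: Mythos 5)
Your proposal follows essentially the same route as the paper's proof: after cancelling the rescaled system equations satisfied by the individual bumps, you arrive at exactly the paper's decomposition into the potential-difference term, the nonlinear interaction term, and the nonlocal gradient cross terms, and you estimate each the same way (H\"older continuity of $V$ plus the splitting $V(x)-V(a_j)=(V(x)-V(y^j))+(V(y^j)-V(a_j))$ for the first, exponential smallness of the interactions for the other two). One small slip: the scaling identity should read $\ep b\int|\na w^i_{\ep,y^i}|^2=\ep^2 b\int|\na w^i|^2$ (not $\ep^3 b$), which is what makes the coefficient equal $\ep^2\bigl(a+b\sum_l\int|\na w^l|^2\bigr)$ and the cancellation go through; the equation you write for $w^i_{\ep,y^i}$ is nonetheless the correct one.
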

\begin{proof}
Since $(w^{1},\cdots,w^{k})$ solves (\ref{eq: our limit system}),
we have
\[
\begin{aligned}l_{\ep}(\var) & =\sum_{j=1}^{k}\int(V(x)-V(a_{j}))w_{\ep,y_{\epsilon}^{j}}^{j}\var-\int\left(W_{\epsilon,Y}^{p}-\sum_{j=1}^{k}(w_{\epsilon,y_{\epsilon}^{j}}^{j})^{p}\right)\varphi\\
 & \quad+\int|\nabla W_{\epsilon,Y}|^{2}\int\nabla W_{\epsilon,Y}\cdot\nabla\varphi-\int\sum_{i=1}^{k}|\nabla w_{\epsilon,y_{\epsilon}^{i}}^{i}|^{2}\int\nabla W_{\ep,Y}\cdot\nabla\varphi\\
 & =:l_{1}-l_{2}+l_{3}.
\end{aligned}
\]
By the same arguments as that of Lemma 3.2 in \cite{LLPWX-2017},
we obtain
\[
\begin{aligned}|l_{1}| & \le\int\sum_{j=1}^{k}|V(x)-V(y_{\epsilon}^{j})|w_{\ep,y_{\epsilon}^{j}}^{j}\var+\int\sum_{j=1}^{k}(V(y_{\epsilon}^{j})-V(a_{j}))w_{\ep,y_{\epsilon}^{j}}^{j}\var\\
 & \le C\ep^{\frac{3}{2}}\Big(\ep^{\theta}+\sum_{j=1}^{k}(V(y_{\epsilon}^{j})-V(a_{j}))\Big)\|\var\|_{\ep}.
\end{aligned}
\]
To estimate $l_{2}$, note that
\[
\begin{aligned}\left|W_{\epsilon,Y}^{p}-\sum_{j=1}^{k}(w_{\epsilon,y_{\epsilon}^{j}}^{j})^{p}\right| & \le\begin{cases}
C\sum_{i\neq j}\left(w_{\ep,y^{i}}^{i}\right)^{p/2}\left(w_{\ep,y^{j}}^{j}\right)^{p/2}|\varphi|, & 1<p\le2,\vspace{0.5mm}\\
C\sum_{i\ne j}\left((w_{\ep,y^{i}}^{i})^{p-1}w_{\epsilon,y_{\epsilon}^{j}}^{j}+w_{\epsilon,y_{\epsilon}^{i}}^{i}(w_{\epsilon,y_{\epsilon}^{j}}^{j})^{p-1}\right)|\varphi|, & 2<p.
\end{cases}\end{aligned}
\]
So, using (\ref{eq: small interaction-1}) and H\"older's inequality
gives
\[
|l_{2}|\le C\ep^{\frac{3}{2}}e^{-\frac{\ga}{\ep}}\|\var\|_{\ep}\le C\ep^{\frac{3}{2}+\theta}\|\var\|_{\ep}
\]
for $\ep>0$ sufficiently small.

To estimate $l_{3}$, using (\ref{eq: small interaction-1}) again
yields
\[
\begin{aligned}l_{3} & =\ep b\left(\sum_{i\neq j}\int\na w_{\ep,y_{\ep}^{i}}^{i}\cdot\na w_{\ep,y_{\ep}^{j}}^{j}\right)\int\na W_{\ep,Y}\cdot\na\var.\\
 & =O\left(\ep^{2}e^{-\ga/\ep}\|\na W_{\ep,Y}\|_{2}\|\var\|_{2}\right)\\
 & = O\left(\ep^{\frac{3}{2}+\theta}\|\var\|_{\ep}\right).
\end{aligned}
\]
Finally, combining the above estimates gives the required estimate.
\end{proof}
Next we give estimates for $R_{\ep}$ (see (\ref{eq: 2rd order reminder term}))
and its derivatives $R_{\ep}^{(i)}$ for $i=1,2$.

\begin{lemma} \label{lem: error estimates}
There exists a constant
$C>0$, independent of $\ep$ and $b$, such that for $i\in\{0,1,2\}$,
there hold
\[
\|R_{\ep}^{(i)}(\var)\|\le C\ep^{-\frac{3(p-1)}{2}}\|\var\|_{\ep}^{p+1-i}+C(b+1)\ep^{-\frac{3}{2}}\left(1+\ep^{-\frac{3}{2}}\|\var\|_{\ep}\right)\|\var\|_{\ep}^{3-i}
\]
 for all $\var\in H_{\ep}$.
 \end{lemma}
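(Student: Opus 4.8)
The plan is to expand $I_\ep$ around $W_{\ep,Y}$ and isolate exactly the terms that survive in the remainder \eqref{eq: 2rd order reminder term}. From \eqref{eq: variational functional}, $I_\ep$ consists of a purely quadratic part, the nonlocal quartic term $\frac{b\ep}{4}(\int|\na u|^2)^2$, and the nonlinearity $-\frac{1}{p+1}\int u_+^{p+1}$. Writing $u=W_{\ep,Y}+\var$ and using $\int|\na(W_{\ep,Y}+\var)|^2=\int|\na W_{\ep,Y}|^2+2\int\na W_{\ep,Y}\cdot\na\var+\int|\na\var|^2$, the quartic term expands into monomials of degrees $0,1,2,3,4$ in $\var$; comparing with the definitions \eqref{eq: def of l-epsilon}--\eqref{eq: def of biliear L-epsilon} of $l_\ep$ and $\L_\ep$ shows that the degree $\le 2$ monomials are cancelled exactly in \eqref{eq: 2rd order reminder term}, leaving the cubic and quartic contributions
\[
\ep b\Big(\int\na W_{\ep,Y}\cdot\na\var\Big)\int|\na\var|^2+\frac{\ep b}{4}\Big(\int|\na\var|^2\Big)^2 .
\]
Likewise, the nonlinearity contributes to $R_\ep$ only its second-order Taylor remainder
\[
-\frac{1}{p+1}\int\Big[(W_{\ep,Y}+\var)_+^{p+1}-W_{\ep,Y}^{p+1}-(p+1)W_{\ep,Y}^p\var-\tfrac{p(p+1)}{2}W_{\ep,Y}^{p-1}\var^2\Big].
\]
Thus $R_\ep=R_\ep^{(\mathrm q)}+R_\ep^{(\mathrm{nl})}$, and I would estimate each piece and its derivatives separately, noting that since both are integrals, $R_\ep^{(i)}$ is obtained by formal differentiation and produces Taylor remainders of lower order.

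For the nonlocal piece $R_\ep^{(\mathrm q)}$ the key scalings are $\int|\na W_{\ep,Y}|^2=O(\ep)$ (from the scaling of $w^i_{\ep,y^i}$ together with \eqref{eq: small interaction-1}), hence $\|\na W_{\ep,Y}\|_{L^2}=O(\ep^{1/2})$, and $\|\na\var\|_{L^2}^2\le a^{-1}\ep^{-2}\|\var\|_\ep^2$ directly from the definition of $\|\cdot\|_\ep$. Since $R_\ep^{(\mathrm q)}$ is polynomial, Cauchy--Schwarz bounds the cubic term by $\ep b\|\na W_{\ep,Y}\|_{L^2}\|\na\var\|_{L^2}^3=O(b\ep^{-3/2}\|\var\|_\ep^3)$ and the quartic term by $O(b\ep^{-3}\|\var\|_\ep^4)=O(b\ep^{-3/2}\cdot\ep^{-3/2}\|\var\|_\ep\cdot\|\var\|_\ep^3)$, both dominated by the second summand of the claim with $i=0$. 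Differentiating lowers the polynomial degree, replacing $\|\var\|_\ep^3$ by $\|\var\|_\ep^{3-i}$ while keeping the same $\ep$-powers, so the cases $i=1,2$ follow identically.

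For $R_\ep^{(\mathrm{nl})}$ I would use the elementary second-order Taylor inequalities for $g(t)=t_+^{p+1}$: the integrand is bounded by $C|\var|^{p+1}$ when $1<p\le2$, and by $C\big(W_{\ep,Y}^{p-2}|\var|^3+|\var|^{p+1}\big)$ when $p>2$. For the pure-power part, the $\ep$-Sobolev inequality \eqref{eq: epsilon-Sobolev inequality} gives $\int|\var|^{p+1}=\|\var\|_{L^{p+1}}^{p+1}\le C\ep^{3-\frac32(p+1)}\|\var\|_\ep^{p+1}=C\ep^{-\frac{3(p-1)}{2}}\|\var\|_\ep^{p+1}$, the first summand of the claim. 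For the cross term present when $p>2$, H\"older with exponents $\frac{p+1}{p-2}$ and $\frac{p+1}{3}$, combined with $\|W_{\ep,Y}\|_{L^{p+1}}=O(\ep^{3/(p+1)})$ and \eqref{eq: epsilon-Sobolev inequality}, yields $\int W_{\ep,Y}^{p-2}|\var|^3=O(\ep^{-3/2}\|\var\|_\ep^3)$, which is absorbed into the second summand. The derivatives are handled the same way, using the first- and zeroth-order Taylor remainders of $t_+^{p}$ and $t_+^{p-1}$ (bounded by $C|\var|^{p}$, resp. $C|\var|^{p-1}$, for $1<p\le2$, with the analogous $W_{\ep,Y}^{p-2}$ cross terms for $p>2$); H\"older and \eqref{eq: epsilon-Sobolev inequality} then land exactly on $\ep^{-3(p-1)/2}\|\var\|_\ep^{p+1-i}$ and the second family of terms.

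The routine part is the decomposition and the polynomial estimate of $R_\ep^{(\mathrm q)}$. I expect the \emph{main obstacle} to be the bookkeeping of powers of $\ep$ in the regime $p>2$: the cubic cross term $\int W_{\ep,Y}^{p-2}|\var|^3$ (and its derivative analogues $\int W_{\ep,Y}^{p-2}|\var|^{2}|\psi|$, $\int W_{\ep,Y}^{p-2}|\var|\,|\psi|\,|\eta|$) must be shown to scale precisely as $\ep^{-3/2}$, so that it merges with the nonlocal contribution rather than corrupting the first bound. This hinges on measuring $W_{\ep,Y}$ in $L^{p+1}$ at the exact exponent $3/(p+1)$ and on choosing the H\"older exponents so that every $\var$-factor is measured in $L^{p+1}$, the one norm for which \eqref{eq: epsilon-Sobolev inequality} gives the sharp $\ep$-power. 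The other point requiring care is uniformity of the constant in $b$, which is achieved by factoring $b$ explicitly out of $R_\ep^{(\mathrm q)}$ and bounding the coefficient by $b+1$.
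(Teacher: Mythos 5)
Your argument is correct and is precisely the Taylor-expansion argument that the paper defers to (it cites Lemma 3.3 of \cite{LLPWX-2017} and omits the details): isolate the cubic/quartic monomials $\ep b(\int\na W_{\ep,Y}\cdot\na\var)\int|\na\var|^{2}+\frac{\ep b}{4}(\int|\na\var|^{2})^{2}$ from the nonlocal term and the second-order Taylor remainder of $t_{+}^{p+1}$, then use $\int|\na W_{\ep,Y}|^{2}=O(\ep)$, $\|\na\var\|_{2}\le C\ep^{-1}\|\var\|_{\ep}$, $\|W_{\ep,Y}\|_{L^{p+1}}=O(\ep^{3/(p+1)})$ and \eqref{eq: epsilon-Sobolev inequality}. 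Your power counting (in particular the $\ep^{-3/2}$ scaling of $\int W_{\ep,Y}^{p-2}|\var|^{3}$ and the $(b+1)$, $\ep^{-3(p-1)/2}$ bookkeeping) checks out, so nothing further is needed.
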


\begin{proof}
This lemma can be proved by the same argument as that of Lemma 3.3
in \cite{LLPWX-2017}. We omit the details.
\end{proof}
Next we consider the operator $\L_{\ep}$ defined as in (\ref{eq: def of biliear L-epsilon}).

\begin{proposition}
There exists $\ep_{1},\de_{1}$ and $\rho>0$
such that for all $\ep\in(0,\ep_{1})$, $\de\in(0,\de_{1})$ and all
$Y\in D_{\de}$, there holds
\begin{eqnarray*}
\|\L_{\ep}\var\|_{\ep}\ge\rho\|\var\|_{\ep}, &  & \var\in E_{\ep,Y}.
\end{eqnarray*}
\end{proposition}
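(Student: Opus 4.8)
The plan is to argue by contradiction via a blow-up analysis around each concentration point, reducing the claim to the nondegeneracy of the limiting system \eqref{eq: our limit system}. Suppose the estimate fails. Then there are sequences $\ep_n\to0$, $\de_n\to0$, $Y_n=(y_n^1,\dots,y_n^k)\in D_{\de_n}$ and $\var_n\in E_{\ep_n,Y_n}$ with $\|\var_n\|_{\ep_n}=1$ and $\|\L_{\ep_n}\var_n\|_{\ep_n}\to0$. For each $i$ I set $\hat v_n^i(z)=\ep_n^{3/2}\var_n(\ep_n z+y_n^i)$. A change of variables together with (V1) gives $\int(a|\na\hat v_n^i|^2+(\hat v_n^i)^2)\le C\|\var_n\|_{\ep_n}^2=C$, so $\hat v_n^i$ is bounded in $H^1(\R^3)$, and along a subsequence $\hat v_n^i\wto v^i$ weakly in $H^1(\R^3)$ and strongly in $L^q_{\loc}$ for $2\le q<6$.

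Next I would identify the equation satisfied by the limits. Fixing $\phi\in C_c^\wq(\R^3)$ and testing with $\psi_n(x)=\ep_n^{-3/2}\phi((x-y_n^i)/\ep_n)$ (which has $\|\psi_n\|_{\ep_n}$ uniformly bounded), I expand $\langle\L_{\ep_n}\var_n,\psi_n\rangle$ using \eqref{eq: def of biliear L-epsilon}. After rescaling, the local terms converge to $(a+b\sum_l\|\na w^l\|_2^2)\int\na v^i\cdot\na\phi+V(a_i)\int v^i\phi-p\int(w^i)^{p-1}v^i\phi$, where near $y_n^i$ one has $W_{\ep_n,Y_n}\to w^i$ after rescaling while the other bumps are exponentially negligible by \eqref{eq: exponential decay} and \eqref{eq: small interaction-1}, and $\int|\na W_{\ep_n,Y_n}|^2=\ep_n\sum_l\|\na w^l\|_2^2+o(\ep_n)$. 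The crucial point is the third, genuinely nonlocal, term in \eqref{eq: def of biliear L-epsilon}: each of its two factors scales like $\ep_n^{-1/2}$, and in the limit it contributes $2bT\int\na w^i\cdot\na\phi$ with $T:=\sum_{l=1}^k\int\na w^l\cdot\na v^l$. Hence $(v^1,\dots,v^k)$ solves the coupled system $\L_+^i v^i=2b\big(T-\int\na w^i\cdot\na v^i\big)\De w^i$, $1\le i\le k$, which is exactly the linearization of \eqref{eq: our limit system}; this is the analytic shadow of the fact (Proposition \ref{prop: the form of multi-peak solutions}) that the true limiting problem is a system rather than $k$ uncoupled equations.

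I would then deduce $v^i=0$ for all $i$. Since $\int\De w^i\,\pa_{x_j}w^i=0$, the forcing $\De w^i$ is orthogonal to $\Ker\L_+^i=\span\{\pa_{x_j}w^i\}$ (Proposition \ref{prop: uniq and nondege}), so $\L_+^i$ is invertible on $(\Ker\L_+^i)^\perp$ and, writing $t_i=\int\na w^i\cdot\na v^i$, one gets $v^i=2b(T-t_i)(\L_+^i)^{-1}\De w^i$. Pairing with $\na w^i$ yields the closed scalar relation $t_i(1+2bm_i)=2bm_iT$ with $m_i:=\int\na w^i\cdot\na(\L_+^i)^{-1}\De w^i$; summing over $i$ forces $T=0$ unless $\sum_i 2bm_i/(1+2bm_i)=1$, and the latter is excluded by the sign information on $m_i$ contained in the single-peak nondegeneracy computation of \cite{LLPWX-2017}. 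Thus $T=0$, hence $v^i\in\Ker\L_+^i$. Finally the constraint $\var_n\in E_{\ep_n,Y_n}$ passes to the limit: rescaling the orthogonality $\langle\var_n,\pa_{y_j^i}w_{\ep_n,y_n^i}^i\rangle_{\ep_n}=0$ gives $\int(a\na v^i\cdot\na\pa_{x_j}w^i+V(a_i)v^i\pa_{x_j}w^i)=0$, i.e. $v^i\perp\Ker\L_+^i$, so $v^i=0$.

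To close the argument I would recover a contradiction from an energy identity. By Cauchy--Schwarz $|\langle\L_{\ep_n}\var_n,\var_n\rangle|\le\|\L_{\ep_n}\var_n\|_{\ep_n}\to0$, while the two nonlocal terms in \eqref{eq: def of biliear L-epsilon} are nonnegative when $\psi=\var_n$; hence $p\int W_{\ep_n,Y_n}^{p-1}\var_n^2\ge\|\var_n\|_{\ep_n}^2+o(1)=1+o(1)$. On the other hand, localizing near the peaks and using the strong $L^2_{\loc}$ convergence $\hat v_n^i\to v^i$ together with the exponential decay \eqref{eq: exponential decay} of $w^i$ to control the tails, one obtains $\int W_{\ep_n,Y_n}^{p-1}\var_n^2\to\sum_{i=1}^k\int(w^i)^{p-1}(v^i)^2=0$, contradicting $p\int W_{\ep_n,Y_n}^{p-1}\var_n^2\to1$. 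The main obstacle is the nonlocal coupling term analyzed in the second step: it does not vanish in the limit but ties all $k$ profiles together, so the argument hinges on upgrading the individual nondegeneracy of $\L_+^i$ to nondegeneracy of the coupled linearized system, which is precisely where Proposition \ref{prop: uniq and nondege} must be invoked.
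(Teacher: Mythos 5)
Your overall strategy coincides with the paper's: argue by contradiction, rescale around each $y_n^i$, identify the limit of the quadratic form, kill the limit profiles by combining nondegeneracy with the orthogonality constraints defining $E_{\ep,Y}$, and conclude via the estimate $p\int W_{\ep_n,Y_n}^{p-1}\var_n^2\ge\|\var_n\|_{\ep_n}^2+o(1)$ against $\int W_{\ep_n,Y_n}^{p-1}\var_n^2\to\sum_i\int(w^i)^{p-1}(v^i)^2$. Your treatment of the genuinely nonlocal term in \eqref{eq: def of biliear L-epsilon} is in fact more explicit than the paper's: you correctly observe that the factor $\int\na W_{\ep_n,Y_n}\cdot\na\var_n$ collects contributions from every peak, so that the limit problem is the coupled system $\L_+^iv^i=2b(T-t_i)\De w^i$ with $T=\sum_l t_l$, $t_l=\int\na w^l\cdot\na v^l$, rather than the $k$ decoupled equations $\langle\L_+^{i_0}\var,g\rangle=0$ that the paper writes down by invoking ``the same argument as \cite{Cao-Peng-2009}'' (where no such nonlocal term exists).

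There is, however, a genuine gap at the decisive step $T=0$. Your scalar relations $t_i(1+2bm_i)=2bm_iT$ yield $T\big(1-\sum_i\tfrac{2bm_i}{1+2bm_i}\big)=0$, and you exclude $\sum_i\tfrac{2bm_i}{1+2bm_i}=1$ by appealing to unspecified ``sign information on $m_i$'' from \cite{LLPWX-2017}. That is precisely the point that requires proof: Proposition \ref{prop: uniq and nondege} only asserts $\operatorname{Ker}\L_+^i={\rm span}\{\pa_{x_j}w^i\}$ for each $i$ separately, which says nothing about the value of the scalar $m_i=\int\na w^i\cdot\na(\L_+^i)^{-1}\De w^i$, nor does it rule out $1+2bm_i=0$, in which case your relation degenerates and no longer determines $t_i$. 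Unless one establishes the quantitative statement $\sum_i\tfrac{2bm_i}{1+2bm_i}\ne1$ --- equivalently, the nondegeneracy of the full coupled linearization of the system \eqref{eq: our limit system}, a strictly stronger fact than the componentwise nondegeneracy recorded in Proposition \ref{prop: uniq and nondege} --- the limits $v^i$ could be nonzero radial functions, $\sum_i\int(w^i)^{p-1}(v^i)^2$ could be positive, and your final contradiction evaporates. To close the argument you must either carry out the computation of $m_i$ (it reduces to the sign of $\langle L_0^{-1}\De w^i,\De w^i\rangle$ for the local linearized operator $L_0=-c\De+V(a_i)-p(w^i)^{p-1}$, in the spirit of the single-peak computation in \cite{LLPWX-2017}) and verify the resulting inequality for the sum over all $k$ peaks, or find another mechanism that decouples the limit system.
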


\begin{proof}
We use a contradiction argument. Assume, on the contrary, that there
exist $\ep_{n}\to0$, $\de_{n}\to0$ and $Y_{n}=(y_{n}^{1},\cdots,y_{n}^{k})\in D_{\de_{n}}$
and $\var_{n}\in E_{n}\equiv E_{\ep_{n},Y_{n}}$ such that
\begin{eqnarray}
\langle\L_{\ep_{n}}\var_{n},h_{n}\rangle=o_{n}(1)\|\var_{n}\|_{\ep_{n}}\|h_{n}\|_{\ep_{n}}, &  & \forall\:h_{n}\in E_{n}.\label{eq: B.1}
\end{eqnarray}
Since the equality is homogeneous, we may assume, with no loss of
generality, that $\|\var_{n}\|_{\ep_{n}}=\ep_{n}^{3/2}$.

To deduce contradiction, we introduce $\var_{n}^{i_{0}}(x)=\var_{n}(\ep_{n}x+y_{n}^{i_{0}})$
for each $i_{0}=1,\ldots,,k$. Then, in terms of $\var_{n}^{i_{0}}$, (\ref{eq: B.1}) can be written as
\begin{equation}
\begin{aligned}
& \quad\int\left(\na\var_{n}^{i_{0}}\cdot\na g_{n}+V(\ep_{n}x+y_{n}^{i_{0}})\var_{n}^{i_{0}}g_{n}\right)\\
 & +b\left(\int\sum_{i=1}^{k}|\na w^{i}|^{2}+\int\sum_{j\neq i_{0}}\na w^{i_{0}}\cdot\na w^{j}\left(x+\frac{y_{n}^{i_{0}}-y_{n}^{j}}{\ep_{n}}\right)\right)\int\na\var_{n}^{i_{0}}\cdot\na g_{n}\\
 & +2b\int\na\left(W_{\ep_{n},Y_{n}}\left(\frac{x-y_{n}^{i_{0}}}{\ep_{n}}\right)\right)\cdot\na\var_{n}^{i_{0}}\int\na\left(W_{\ep_{n},Y_{n}}\left(\frac{x-y_{n}^{i_{0}}}{\ep_{n}}\right)\right)\cdot\na g_{n}\\
 & -p\int\left(w^{i_{0}}(x)+\sum_{j\ne i_{0}}w^{j}\left(x+\frac{y_{n}^{i_{0}}-y_{n}^{j}}{\ep_{n}}\right)\right)^{p-1}\var_{n}^{i_{0}}g_{n}\\
 & =o_{n}(1)\left(\int|\na g_{n}|^{2}+V(\ep_{n}x+y_{n}^{i_{0}})g_{n}^{2}\right)^{1/2}
\end{aligned}
\label{eq: B.2}
\end{equation}
for every $g_{n}\in\tilde{E}_{n}$, where
\[
\tilde{E}_{n}=\left\{ g_{n}:g_{n}\left(\frac{x-y_{n}^{i_{0}}}{\ep_{n}}\right)\in E_{n},1\le i_{0}\le k\right\} .
\]
Note that, $g_{n}\in\tilde{E}_{n}$ satisfies
\[
\int\left(\na g_{n}\cdot\na\pa_{x_{j}}w^{i_{0}}+V(\ep_{n}x+y_{n}^{i_{0}})g_{n}\pa_{x_{j}}w^{i_{0}}\right)=0
\]
for all $i_{0}\le k$ and all $1\le j\le3$.

Note also that
\[
\int|\na\var_{n}^{i_{0}}|^{2}+V(\ep_{n}x+y_{n}^{i_{0}})(\var_{n}^{i_{0}})^{2}\D x=1,
\]
which implies that $\var_{n}^{i_{0}}$, $n\ge1$, are uniformly bounded
in $H^{1}(\R^{3})$. So we may assume (up to a subsequence) that
\begin{eqnarray*}
 &  & \var_{n}^{i_{0}}\wto\var\quad\text{weakly in }H^{1}(\R^{3})\\
 &  & \var_{n}^{i_{0}}\to\var\quad\text{in }L_{\loc}^{q}(\R^{3})\quad(1\le q<6)\\
 &  & \var_{n}^{i_{0}}\to\var\quad\text{a.e. in }\R^{3}.
\end{eqnarray*}

Now, for any $g\in C_{0}^{\wq}(\R^{3})$, define
\[
g_{n}=g-\sum_{j=1}^{j}\sum_{i=1}^{k}a_{n,i}^{j}\pa_{x_{j}}w^{i}
\]
for suitable chosen $a_{n,i}^{j}\in\R$, $1\le j\le3$ and $1\le i\le k$,
such that $g_{n}\in\tilde{E}_{n}$. Substitute $g_{n}$ into (\ref{eq: B.2})
and send $n\to\wq$. We obtain, by the same argument as that of \cite[Appendix]{Cao-Peng-2009},
that
\[
\langle\L_{+}^{i_{0}}\var,g\rangle-\langle\L_{+}^{i_{0}}\var,\sum_{j=1}^{3}a_{i_{0}}^{j}\pa_{x_{j}}w^{i_{0}}\rangle=0,
\]
where $\L_{+}^{i_{0}}$ is defined as in Proposition \ref{prop: uniq and nondege}.
Since $\L_{+}^{i_{0}}$ is symmetric and $\pa_{x_{j}}w^{i_{0}}\in\operatorname{Ker}\L_{+}^{i_{0}}$,
we deduce
\[
\langle\L_{+}^{i_{0}}\var,g\rangle=0
\]
for all $g\in C_{0}^{\wq}(\R^{3})$. As a result, we obtain $\var\in\operatorname{Ker}\L_{+}^{i_{0}}$.
By Proposition \ref{prop: uniq and nondege} we infer that
\[
\var=\sum_{j=1}^{3}a_{i_{0}}^{j}\pa_{x_{j}}w^{i_{0}}
\]
for some $a_{i_{0}}^{j}\in\R$.

Claim that $\var\equiv0$. Indeed, since
\[
\int\left(\na\var_{n}^{i_{0}}\cdot\na\pa_{x_{j}}w^{i_{0}}+V(\ep_{n}x+y_{n}^{i_{0}})\var_{n}^{i_{0}}
\pa_{x_{j}}w^{i_{0}}\right)=0
\]
for each $j=1,2,3$, sending $n\to\wq$ yields
\[
\sum_{l=1}^{3}a_{i_{0}}^{l}\int\left(\na\pa_{x_{l}}w^{i_{0}}\cdot\na\pa_{x_{j}}w^{i_{0}}
+V(y^{i_{0}})\pa_{x_{l}}w^{i_{0}}\pa_{x_{j}}w^{i_{0}}\right)=0,
\]
which implies $a_{i_{0}}^{l}=0$ for all $1\le l\le3$ since $w^{i_{0}}$
is a radially symmetric function.

Now we deduce contradiction as follows. First note that by taking
$R$ sufficiently large and recalling that $w^{i_{0}}$ decays exponentially,
we have
\[
\int_{B_{R}^{c}(0)}\left(w^{i_{0}}(x)\right)^{p-1}\left(\var_{n}^{i_{0}}\right)^{2}\le\frac{1}{4}.
\]
Since $\var\equiv0$, we have $\var_{n}^{i_{0}}\to0$ strongly in
$L^{2}(B_{R}(0))$. Therefore, for $n$ sufficiently large there holds
\[
\begin{aligned}\int W_{\ep_{n},Y_{n}}^{p-1}\var_{n}^{2} & =\ep_{n}^{3}\int W_{\ep_{n},Y_{n}}^{p-1}(\ep_{n}x+y_{n}^{i_{0}})\left(\var_{n}^{i_{0}}\right)^{2}\\
 & =\ep_{n}^{3}\int\left(w^{i_{0}}(x)\right)^{p-1}\left(\var_{n}^{i_{0}}\right)^{2}+o_{n}(1)\ep_{n}^{3}\int\left(\var_{n}^{i_{0}}\right)^{2}\\
 & =\ep_{n}^{3}\int_{B_{R}(0)}\left(w^{i_{0}}(x)\right)^{p-1}\left(\var_{n}^{i_{0}}\right)^{2}+\ep_{n}^{3}\int_{B_{R}^{c}(0)}\left(w^{i_{0}}(x)\right)^{p-1}\left(\var_{n}^{i_{0}}\right)^{2}+o_{n}(1)\ep_{n}^{3}\\
 & \le\frac{1}{2}\ep_{n}^{3}.
\end{aligned}
\]
This yields
\[
o_{n}(1)\|\var_{n}\|_{\ep_{n}}^{2}=\langle\L_{\ep_{n}}\var_{n},\var_{n}\rangle\ge\|\var_{n}\|_{\ep_{n}}^{2}-p\int W_{\ep_{n},Y_{n}}^{p-2}\var_{n}^{2}\ge\frac{1}{2}\|\var_{n}\|_{\ep_{n}}^{2}.
\]
We reach a contradiction. The proof is complete.
\end{proof}

\begin{proposition} \label{prop: reduction map}
There exist $\ensuremath{\ep_{0}>0}$
and $\de_{0}>0$ sufficiently small such that for all $\ensuremath{\ep\in(0,\ep_{0})}$
and $\ensuremath{\de\in(0,\de_{0})}$, there exists a $\ensuremath{C^{1}}$
map $\ensuremath{\var_{\ep}:D_{\epsilon,\delta}\to H_{\ep}}$ with
$\ensuremath{Y\mapsto\var_{\ep,Y}\in E_{\ep,Y}}$ satisfying
\begin{eqnarray*}
\left\langle \frac{\pa J_{\ep}(Y,\var_{\ep,Y})}{\pa\var},\psi\right\rangle _{\ep}=0, &  & \forall\:\psi\in E_{\ep,Y}.
\end{eqnarray*}
 Moreover, we can choose $\ensuremath{\tau\in(0,\theta/2)}$ as small
as we wish, such that
\begin{equation}
\|\var_{\ep,Y}\|_{\ep}\le\ep^{\frac{3}{2}}\left(\ep^{\theta-\tau}+\sum_{j=1}^{2}\left(V(y_{\epsilon}^{j})-V(a_{j})\right)^{1-\tau}\right).\label{eq: a simple estimate}
\end{equation}
\end{proposition}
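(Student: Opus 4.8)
The plan is to solve the projected equation $\langle\pa_\var J_\ep(Y,\var),\psi\rangle=0$ for all $\psi\in E_{\ep,Y}$ by a contraction mapping argument, exploiting the uniform invertibility of $\L_\ep$ on $E_{\ep,Y}$ established in the previous proposition. Let $P_{\ep,Y}:H_\ep\to E_{\ep,Y}$ be the $\langle\cdot,\cdot\rangle_\ep$-orthogonal projection, and let $\ell_\ep\in H_\ep$ be the Riesz representative of the linear functional $l_\ep$. Differentiating the expansion $J_\ep(Y,\var)=J_\ep(Y,0)+l_\ep(\var)+\tfrac12\langle\L_\ep\var,\var\rangle+R_\ep(\var)$ shows that $\var\in E_{\ep,Y}$ solves the projected equation if and only if $P_{\ep,Y}\big(\ell_\ep+\L_\ep\var+R_\ep'(\var)\big)=0$, which (since the bound $\|\L_\ep\var\|_\ep\ge\rho\|\var\|_\ep$ on $E_{\ep,Y}$ makes $P_{\ep,Y}\L_\ep|_{E_{\ep,Y}}$ boundedly invertible, uniformly in $\ep\in(0,\ep_1)$, $\de\in(0,\de_1)$ and $Y\in D_\de$) is equivalent to the fixed-point equation $\var=A_{\ep,Y}(\var):=-(P_{\ep,Y}\L_\ep|_{E_{\ep,Y}})^{-1}P_{\ep,Y}\big(\ell_\ep+R_\ep'(\var)\big)$.

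Next I would run the Banach fixed-point theorem on a ball $B_\mu=\{\var\in E_{\ep,Y}:\|\var\|_\ep\le\mu\}$ with radius $\mu$ of order $\ep^{3/2}\big(\ep^\theta+\sum_j(V(y_\ep^j)-V(a_j))\big)$. Lemma \ref{lem: estimate for the first order} gives $\|\ell_\ep\|_\ep\le C\ep^{3/2}\big(\ep^\theta+\sum_j(V(y_\ep^j)-V(a_j))\big)$, while Lemma \ref{lem: error estimates} controls $\|R_\ep^{(i)}(\var)\|$ for $i=1,2$ by the stated negative powers of $\ep$ times $\|\var\|_\ep^{p+1-i}$ and $\|\var\|_\ep^{3-i}$. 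A direct check shows that on $B_\mu$ the nonlinear contributions $\ep^{-3(p-1)/2}\mu^{p}$ and $(b+1)\ep^{-3/2}(1+\ep^{-3/2}\mu)\mu^{2}$ each carry the factor $\ep^{3/2}$ multiplied by a strictly positive power of $\ep$ (because $p>1$ and $\mu=O(\ep^{3/2+})$), hence are of smaller order than $\|\ell_\ep\|_\ep$. Thus $A_{\ep,Y}$ maps $B_\mu$ into itself and, using the quadratic bound on $R_\ep''$, is a contraction there for $\ep,\de$ small, producing a unique $\var_{\ep,Y}\in B_\mu$.

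The $C^1$ dependence of $Y\mapsto\var_{\ep,Y}$ I would obtain from the implicit function theorem applied to the $C^1$ map $(Y,\var)\mapsto P_{\ep,Y}\big(\pa_\var J_\ep(Y,\var)\big)$, whose derivative in $\var$ at $\var_{\ep,Y}$ is a small perturbation of $P_{\ep,Y}\L_\ep|_{E_{\ep,Y}}$ and hence invertible; the only technical point is that $E_{\ep,Y}$ itself varies with $Y$, which is handled by differentiating the defining constraints $\langle\var,\pa_{y_j^i}w^i_{\ep,y^i}\rangle_\ep=0$ exactly as in Cao and Peng \cite{Cao-Peng-2009}. Finally, to upgrade the crude bound $\|\var_{\ep,Y}\|_\ep\le\mu$ to the sharp estimate \eqref{eq: a simple estimate}, I would reinsert $\var_{\ep,Y}$ into the fixed-point identity and estimate $\|\var_{\ep,Y}\|_\ep\le\rho^{-1}\big(\|\ell_\ep\|_\ep+\|R_\ep'(\var_{\ep,Y})\|\big)$, then feed the resulting bound back once more to trade the nonlinear powers $\|\var_{\ep,Y}\|_\ep^{p}$ and $\|\var_{\ep,Y}\|_\ep^{2}$ against $\|\var_{\ep,Y}\|_\ep$. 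I expect the \emph{main obstacle} to be precisely this exponent bookkeeping: because of the nonlocal term the remainder carries the negative powers $\ep^{-3(p-1)/2}$ and $\ep^{-3/2}$, and balancing these against the possibly very small quantity $\sum_j(V(y_\ep^j)-V(a_j))$ forces one to surrender an arbitrarily small power of $\ep$ in the self-improving inequality. This is what produces the weakened exponents $\theta-\tau$ and $1-\tau$, with the freedom to take $\tau\in(0,\theta/2)$ as small as desired.
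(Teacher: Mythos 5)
Your proposal is correct and is essentially the argument the paper relies on: the paper itself omits the proof, deferring to the single-peak reduction in Li et al.\ \cite{LLPWX-2017}, which is exactly this contraction-mapping scheme built on the coercivity of $\L_{\ep}$ on $E_{\ep,Y}$, the bound on $l_{\ep}$ from Lemma \ref{lem: estimate for the first order}, and the remainder estimates of Lemma \ref{lem: error estimates}. One small remark: the loss $\tau$ in \eqref{eq: a simple estimate} is there mainly to absorb the multiplicative constants (e.g.\ $Ct\le t^{1-\tau}$ for $t$ small), rather than being forced by the negative powers of $\ep$ in $R_{\ep}$, but this does not affect the validity of your argument.
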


This proposition can be proved by the same arguments as that of Li
et al.  \cite{LLPWX-2017} with minor modifications. We omit the details.

\section{Proof of the main result}

In this section we prove Theorem \ref{thm: main reuslt-existence}.
First we give the following observation.

\begin{lemma}\label{lem: 4.2}
There holds
\[
\left\langle \L_{\ep}\var,\var\right\rangle =O\left(\|\var\|_{\ep}^{2}\right)
\]
for $\var\in E_{\ep,Y}$.
\end{lemma}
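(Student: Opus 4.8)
The plan is to put $\psi=\var$ in the definition \eqref{eq: def of biliear L-epsilon} of $\L_\ep$ and bound the four resulting terms separately, showing each is $O(\|\var\|_\ep^2)$. Explicitly,
\[
\langle\L_\ep\var,\var\rangle=\|\var\|_\ep^2+\ep b\Big(\int|\na W_{\ep,Y}|^2\Big)\int|\na\var|^2+2\ep b\Big(\int\na W_{\ep,Y}\cdot\na\var\Big)^2-p\int W_{\ep,Y}^{p-1}\var^2.
\]
The first term is exactly $\|\var\|_\ep^2$. The two elementary bounds I will use throughout come from the definition of $\|\cdot\|_\ep$ and $\inf_{\R^3}V>0$ in (V1): namely $\int\var^2\le C\|\var\|_\ep^2$ and $\int|\na\var|^2\le (a\ep^2)^{-1}\|\var\|_\ep^2=O(\ep^{-2})\|\var\|_\ep^2$.

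For the two nonlocal (cubic) terms I first record, via the scaling $w^i_{\ep,y^i}(x)=w^i((x-y^i)/\ep)$ together with the exponentially small cross interactions \eqref{eq: small interaction-1}, that the Dirichlet energy of $W_{\ep,Y}$ satisfies $\int|\na W_{\ep,Y}|^2=\ep\sum_{i=1}^k\int|\na w^i|^2+O(e^{-\ga/\ep})=O(\ep)$. Hence the prefactor $\ep b\int|\na W_{\ep,Y}|^2=O(\ep^2)$, and combined with $\int|\na\var|^2=O(\ep^{-2})\|\var\|_\ep^2$ the second term is $O(\|\var\|_\ep^2)$. For the third term I apply Cauchy--Schwarz, $\big(\int\na W_{\ep,Y}\cdot\na\var\big)^2\le\|\na W_{\ep,Y}\|_2^2\,\|\na\var\|_2^2=O(\ep)\cdot O(\ep^{-2})\|\var\|_\ep^2=O(\ep^{-1})\|\var\|_\ep^2$, so the factor $\ep b$ in front again restores the order $O(\|\var\|_\ep^2)$.

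The remaining term $p\int W_{\ep,Y}^{p-1}\var^2$ is the one requiring the most care, and I would estimate it by H\"older's inequality with the conjugate exponents $\tfrac{p+1}{p-1}$ and $\tfrac{p+1}{2}$, giving $\int W_{\ep,Y}^{p-1}\var^2\le\|W_{\ep,Y}\|_{L^{p+1}}^{p-1}\|\var\|_{L^{p+1}}^2$. By the same scaling and \eqref{eq: small interaction-1}, $\|W_{\ep,Y}\|_{L^{p+1}}^{p+1}=O(\ep^3)$, so $\|W_{\ep,Y}\|_{L^{p+1}}^{p-1}=O(\ep^{3(p-1)/(p+1)})$, while the $\ep$-weighted Sobolev inequality \eqref{eq: epsilon-Sobolev inequality} with $q=p+1\in[2,6]$ gives $\|\var\|_{L^{p+1}}^2\le C\ep^{6/(p+1)-3}\|\var\|_\ep^2$. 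Multiplying, the powers of $\ep$ add up to $\tfrac{3(p-1)+6}{p+1}-3=0$, so this term too is $O(\|\var\|_\ep^2)$. Collecting the four estimates yields the claim.

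I want to emphasize two points. First, the argument produces the upper bound for every $\var\in H_\ep$; the orthogonality constraint defining $E_{\ep,Y}$ plays no role here, as it is only the matching coercivity lower bound that requires it. Second, the step I expect to be delicate is the exact bookkeeping of the powers of $\ep$: in the nonlocal terms the factor $\ep b$ must cancel precisely against the $O(\ep)$ size of $\|\na W_{\ep,Y}\|_2^2$ and the $O(\ep^{-2})$ blow-up of $\|\na\var\|_2^2$, and in the nonlinear term the H\"older exponents are forced so that the $\ep$-powers from the scaling of $W_{\ep,Y}$ and from the $\ep$-Sobolev inequality cancel to exactly $\ep^0$. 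Any mismatch here would spoil the uniformity of the constant, so keeping the exponents exact is the crux of the estimate.
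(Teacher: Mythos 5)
Your proof is correct, and it is precisely the ``direct'' computation the paper invokes (it only cites the analogous Lemma 4.3 of \cite{LLPWX-2017} rather than writing it out): test with $\psi=\var$, use $\int|\na W_{\ep,Y}|^{2}=O(\ep)$ and $\int|\na\var|^{2}=O(\ep^{-2})\|\var\|_{\ep}^{2}$ for the nonlocal terms, and H\"older plus the $\ep$-Sobolev inequality \eqref{eq: epsilon-Sobolev inequality} for the nonlinear term, with the powers of $\ep$ cancelling exactly. Your observations that the constraint $\var\in E_{\ep,Y}$ is not needed for the upper bound and that the exponent bookkeeping is the only delicate point are both accurate.
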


\begin{proof}
The proof is direct and we refer to the proof of Lemma 4.3 in \cite{LLPWX-2017}.
\end{proof}

Now  we prove Theorem \ref{thm: main reuslt-existence}.

\begin{proof}[Proof of Theorem \ref{thm: main reuslt-existence}]
Let $\ep_{0}$ and $\de_{0}$ be defined as in Proposition \ref{prop: reduction map}
and let $\ep<\ep_{0}$. Fix $0<\de<\de_{0}$. Let $Y\mapsto\var_{\ep,Y}$
for $Y\in D_{\de}$ be the map obtained in Proposition \ref{prop: reduction map}.
We will find a critical point for the function $j_{\ep}$
defined as in (\ref{eq: reduction function}) by Lemma \ref{lem: reduction}.
By the Taylor expansion, we have
\[
j_{\ep}(Y)=J(Y,\var_{\ep,Y})
=I_{\ep}(W_{\ep,Y})+l_{\ep}(\var_{\ep,Y})+
\frac{1}{2}\langle\L_{\ep}\var_{\ep,Y},\var_{\ep,Y}\rangle
+R_{\ep}(\var_{\ep,Y}).
\]
We analyze the asymptotic behavior of $j_{\ep}$ with respect to $\ep$
first.

By Proposition \ref{Prop: asymptotics of perturbation of $W_{Y}$}, we have
\[
I_{\ep}(W_{\ep,Y})=C_1\ep^{3}+\ep^{3}\sum_{j=1}^{k}C_{2,j}\left(V(y^{j})-V(a_{j})\right)
+O(\ep^{3+\theta})
\]
for some constants $C_1,C_{2,1},\ldots,C_{2,k}\in\R$. Lemma \ref{lem: estimate for the first order}
and Proposition \ref{prop: reduction map} give
\[
l_{\ep}(\var_{\ep,Y})=O(\ep^{3})\left(\ep^{\theta}
+\sum_{j=1}^{k}\left(V(y^{j})-V(a_{j})\right)\right)
\left(\ep^{\theta-\tau}+\sum_{j=1}^{k}\left(V(y^{j})-V(a_{j})\right)^{1-\tau}\right).
\]
Lemma \ref{lem: 4.2} gives $\langle\L_{\ep}\var_{\ep,Y},\var_{\ep,Y}\rangle=O(\|\var_{\ep,Y}\|_{\ep}^{2})$.
Lemma \ref{lem: error estimates} gives
\[
|R_{\ep}(\var_{\ep,Y})|\le C\left(\ep^{-\frac{3(p-1)}{2}}\|\var_{\ep,Y}\|_{\ep}^{p+1}
+\ep^{-\frac{3}{2}}\|\var_{\ep,Y}\|_{\ep}^{3}\right)=o(1)\|\var_{\ep,Y}\|_{\ep}^{2}
\]
by \eqref{eq: a simple estimate}.
Combining the above estimates yields
\[
\begin{aligned}
j_{\ep}(Y) & =C_1\ep^{3}+\ep^{3}\sum_{j=1}^{k}C_{2,j}
\left(V(y^{j})-V(a_{j})\right)+O(\ep^{3+\theta})\\
 & \quad+O(\ep^{3})\left(\ep^{\theta-\tau}+\sum_{j=1}^{k}\left(V(y^{j})-V(a_{j})\right)^{1-\tau}\right)^{2}.
\end{aligned}
\]

Next consider the minimizing problem
\[
j_{\ep}(Y_{\epsilon})\equiv\inf_{Y\in D_{\de}}j_{\ep}(Y).
\]
We claim that  $Y_{\epsilon}$ is an
interior point of $D_{\de}.$

To prove the claim, we apply a comparison argument. Let $e^{j}\in\R^{3} (j=1,\ldots, k)$
with $|e_{j}|=1$, $e_i\neq e_j$  for $i\neq j$ and $\eta>1$. We will choose $\eta>1$ to be sufficiently large. Let $z^{j}_{\epsilon}=a_{j}+\epsilon^{\eta}e_{j}$ such that $Z_{\epsilon}=(z^{1}_{\ep},\cdots,z^{k}_{\ep})\in D_{\delta}$
for a sufficiently large $\eta>1$. By the above asymptotical formula,
we have
\[
\begin{aligned}
j_{\ep}(Z_{\epsilon}) & =C_1\ep^{3}+\ep^{3}\sum_{j=1}^{k}C_{2,j}\left(V(z^{j}_{\ep})-V(a_{j})\right)+O(\ep^{3+\theta})\\
 & \quad+O(\ep^{3})\left(\ep^{\theta-\tau}
 +\sum_{j=1}^{k}\left(V(z^{j}_{\ep})-V(a_{j})\right)^{1-\tau}\right)^{2}.
\end{aligned}
\]
Applying the H\"older continuity of $V$, we derive that
\[
\begin{aligned}j_{\ep}(Z_{\ep}) & =C_1\ep^{3}+O(\ep^{3+\theta\eta})+O(\ep^{3+\theta})+O(\ep^{3}(\ep^{2(\theta-\tau)}+\ep^{2\eta\theta(1-\tau)}))\\
 & =C_1\ep^{3}+O(\ep^{3+\theta}),
\end{aligned}
\]
where $\eta>1$ is chosen to be sufficiently large accordingly. Note
that we also used the fact that $\tau\ll\theta/2$. Thus, by using $j(Y_{\ep})\le j(Z_{\ep})$
we deduce
\[
\ep^{3}\sum_{j=1}^{k}C_{2,j}\left(V(y^{j}_{\ep})-V(a_{j})\right)+O(\ep^{3})\left(\ep^{\theta-\tau}
+\sum_{j=1}^{k}\left(V(y^{j}_{\ep})-V(a_{j})\right)^{1-\tau}\right)^{2}\le O(\ep^{3+\theta}).
\]
That is,
\begin{equation}
\sum_{j=1}^{k}C_{2,j}\left(V(y^{j}_{\ep})-V(a_{j})\right)
+O(1)\left(\ep^{\theta-\tau}+\sum_{j=1}^{k}\left(V(y^{j}_{\ep})-V(a_{j})\right)^{1-\tau}\right)^{2}\le O(\ep^{\theta}).\label{eq: comparision inequality}
\end{equation}
If $Y_{\epsilon}\in \partial D_{\delta}$, then by the assumption (V2), we have
\[
V(y^{j}_{\ep})-V(a_{j})\ge c_{j}>0,\,\,j=1,\ldots,k\]
for some constants $0<c_{j}$. Thus, by noting that $C_2>0$ from
Proposition \ref{Prop: asymptotics of perturbation of $W_{Y}$} and sending
$\ep\to0$, we infer from (\ref{eq: comparision inequality}) that
\[
0<\sum_{j=1}^k C_{2,j}c_{j}\le0.
\]
We reach a contradiction. This proves the claim.
Thus $Y_{\ep}=(y^{1}_{\epsilon},\ldots,y^{k}_{\epsilon})$ is
a critical point of $j_{\ep}$ in the interior of  $D_{\delta}$.

Theorem \ref{thm: main reuslt-existence} now follows from the claim
and Lemma \ref{lem: reduction}.
\end{proof}

\appendix

\section{asymptotic behaviors }

Let $(w^{1},\cdots,w^{k})$ be the unique positive radial solution
of the system (\ref{eq: our limit system}). Then, for any $\ep>0$
and $Y=(y^{1},\cdots,y^{k})\in D_{\de}$, the functions $w_{\ep,y^{i}}^{j}$,
$1\le j\le k$, satisfy
\begin{eqnarray}
-\left(\ep^{2}a+\ep b\sum_{i=1}^{k}\int|\na w_{\ep,y^{i}}^{i}|^{2}\right)\De w_{\ep,y^{j}}^{j}+V(a_{j})w_{\ep,y^{j}}^{j}=(w_{\ep,y^{j}}^{j})^{p} &  & \text{in }\R^{3}.\label{eq:scaled system}
\end{eqnarray}
\begin{proposition}\label{Prop: asymptotics of perturbation of $W_{Y}$}
Assume that $V$ satisfies (V1) and (V2). Let $Y\in D_{\delta}$ and
$W_{\ep,Y}=\sum_{i=1}^{k}w_{\ep,y^{i}}^{i}$. Then for $\ep>0$ sufficiently
small, we have
\[
I_{\ep}(W_{\ep,Y})=C_{1}\ep^{3}+\sum_{j=1}^{k}C_{2,j}\left(V(y^{j})-V(a_{j})\right)\ep^{3}+O(\ep^{3+\theta}),
\]
where

\[
C_{1}=\Big(\frac{1}{2}-\frac{1}{p+1}\Big)\sum_{j=1}^{k}\int(w^{j})^{p+1}-\frac{b}{4}\left(\sum_{i=1}^{k}\int|\na w^{i}|^{2}\right)^{2}
\]
and
\[
C_{2,j}=\frac{1}{2}\int(w^{j})^{2},\quad j=1,\ldots,k,
\]
and $\theta$ is the H\"older continuity of $V$ in the neighborhood
of $a_{i}$, $1\le i\le k$. \end{proposition}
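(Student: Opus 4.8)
The plan is to substitute $W_{\ep,Y}=\sum_{i=1}^{k}w^{i}_{\ep,y^{i}}$ directly into the functional $I_\ep$ of \eqref{eq: variational functional} and to evaluate its three contributions $\frac12\|W_{\ep,Y}\|_\ep^2$, $\frac{b\ep}{4}\big(\int|\na W_{\ep,Y}|^2\big)^2$ and $-\frac{1}{p+1}\int W_{\ep,Y}^{p+1}$ (note $W_{\ep,Y}>0$, so the positive part is irrelevant) one at a time, reducing each to integrals of the fixed profiles $w^{i}$ through the change of variables $z=(x-y^{i})/\ep$. Under this scaling $\int(w^{i}_{\ep,y^{i}})^{q}=\ep^{3}\int(w^{i})^{q}$ and $\int|\na w^{i}_{\ep,y^{i}}|^{2}=\ep\int|\na w^{i}|^{2}$, so every diagonal term carries a clean power of $\ep$. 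All off-diagonal terms — the gradient cross terms $\int\na w^{i}_{\ep,y^{i}}\cdot\na w^{j}_{\ep,y^{j}}$, the products $\int w^{i}_{\ep,y^{i}}w^{j}_{\ep,y^{j}}$, and the mixed terms arising in the expansion of $\big(\sum_{i}w^{i}_{\ep,y^{i}}\big)^{p+1}$ — are $O(e^{-\ga/\ep})$ for $i\neq j$ by the exponential decay \eqref{eq: exponential decay} and the separation of centers \eqref{eq: small interaction-1}, hence absorbable into the $O(\ep^{3+\theta})$ error.

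First I would treat the quadratic part. The kinetic piece gives $\frac{\ep^{3}a}{2}\sum_{i}\int|\na w^{i}|^{2}$ up to exponentially small terms. For the potential piece I write, after rescaling, $\int V(x)(w^{i}_{\ep,y^{i}})^{2}=\ep^{3}\int V(\ep z+y^{i})(w^{i})^{2}$ and split $V(\ep z+y^{i})=V(a_{i})+(V(y^{i})-V(a_{i}))+(V(\ep z+y^{i})-V(y^{i}))$. The first two pieces produce the constant $\frac{\ep^{3}}{2}\sum_{i}V(a_{i})\int(w^{i})^{2}$ and exactly the desired term $\frac{\ep^{3}}{2}\sum_{i}(V(y^{i})-V(a_{i}))\int(w^{i})^{2}$ with coefficient $C_{2,j}=\frac12\int(w^{j})^{2}$, while the last piece is controlled by the H\"older bound $|V(\ep z+y^{i})-V(y^{i})|\le C\ep^{\theta}|z|^{\theta}$ on the ball where (V2) applies together with the exponential decay of $(w^{i})^{2}$ outside it, yielding an $O(\ep^{3+\theta})$ remainder. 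The nonlocal quartic term rescales to $\frac{b\ep^{3}}{4}\big(\sum_{i}\int|\na w^{i}|^{2}\big)^{2}$ up to exponentially small corrections, and the nonlinearity rescales to $-\frac{\ep^{3}}{p+1}\sum_{i}\int(w^{i})^{p+1}$.

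Collecting these gives $I_\ep(W_{\ep,Y})=\ep^{3}\mathcal{C}+\frac{\ep^{3}}{2}\sum_{j}(V(y^{j})-V(a_{j}))\int(w^{j})^{2}+O(\ep^{3+\theta})$, where $\mathcal{C}=\frac a2\sum_{i}\int|\na w^{i}|^{2}+\frac12\sum_{i}V(a_{i})\int(w^{i})^{2}+\frac b4\big(\sum_{i}\int|\na w^{i}|^{2}\big)^{2}-\frac1{p+1}\sum_{i}\int(w^{i})^{p+1}$. The remaining step is the purely algebraic identification $\mathcal{C}=C_{1}$. For this I would test the $i$-th equation of \eqref{eq: our limit system} against $w^{i}$, obtaining $\big(a+b\sum_{l}\int|\na w^{l}|^{2}\big)\int|\na w^{i}|^{2}+V(a_{i})\int(w^{i})^{2}=\int(w^{i})^{p+1}$; summing over $i$ and writing $S=\sum_{i}\int|\na w^{i}|^{2}$ lets me eliminate $\sum_{i}V(a_{i})\int(w^{i})^{2}=\sum_{i}\int(w^{i})^{p+1}-aS-bS^{2}$ from $\mathcal{C}$, after which the kinetic contributions cancel and $\mathcal{C}$ collapses to $\big(\frac12-\frac1{p+1}\big)\sum_{j}\int(w^{j})^{p+1}-\frac b4S^{2}=C_{1}$.

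The routine parts are the rescalings and the exponential smallness of the cross terms. The step requiring the most care — and the genuine source of the subleading $O(\ep^{3+\theta})$ error — is the H\"older expansion of $V$ about $y^{i}$: I must split each $\int V(\ep z+y^{i})(w^{i})^{2}$ into the region where $|\ep z|<r$ (so that (V2) supplies the $|z|^{\theta}$ bound, and $\int|z|^{\theta}(w^{i})^{2}<\infty$ by exponential decay) and its complement (handled by $\int_{|z|>r/\ep}(w^{i})^{2}=O(e^{-c/\ep})$), keeping the constants uniform over $Y\in D_\de$. The closing algebraic reduction via the Nehari-type identity is short but essential, since without it $\mathcal{C}$ would not match the stated closed form for $C_{1}$.
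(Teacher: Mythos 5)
Your proposal is correct and follows essentially the same route as the paper: discard the off-diagonal terms via the exponential decay of the $w^i$ and the separation of the $a_i$, extract the powers of $\ep$ by rescaling, expand $V$ about $y^i$ and $a_i$ using the H\"older continuity from (V2), and use the Nehari-type identity obtained by testing the limit system \eqref{eq: our limit system} against $w^i$ to reduce the leading constant to the stated $C_1$. The only (cosmetic) difference is that the paper substitutes this identity at the level of the rescaled functions $w^i_{\ep,y^i}$ before computing the $\ep$-powers, whereas you perform the rescaling first and apply the identity to the unscaled profiles at the end; the algebra is identical.
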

\begin{proof}
Recall that
\[
I_{\ep}(W_{\ep,Y})=\frac{1}{2}\int\left(\ep^{2}a|\na W_{\ep,Y}|^{2}+V(x)W_{\ep,Y}^{2}\right)+\frac{\ep b}{4}\left(\int|\na W_{\ep,Y}|^{2}\right)^{2}-\frac{1}{p+1}\int W_{\ep,Y}^{p+1}.
\]
Since $a_{i}\neq a_{j}$ for $i\neq j$ and $w^{i}$ ($1\le i\le k$)
decays exponentially at infinity, the estimates (\ref{eq: small interaction-1})
hold for $i\neq j$. Hence, for $\ep>0$ sufficiently small, we have
\begin{equation}
\begin{aligned}I_{\ep}(W_{\ep,Y}) & =\frac{1}{2}\sum_{i=1}^{k}\int\left(\ep^{2}a|\na w_{\ep,y^{i}}^{i}|^{2}+V(x)\left(w_{\ep,y^{i}}^{i}\right)^{2}\right)\\
 & \quad+\frac{\ep b}{4}\left(\sum_{i=1}^{k}\int|\na w_{\ep,y^{i}}^{i}|^{2}\right)^{2}-\frac{1}{p+1}\int\left(\sum_{i=1}^{k}w_{\ep,y^{i}}^{i}\right)^{p+1}+O(e^{-\ga/\ep})
\end{aligned}
\label{eq: A.1}
\end{equation}
for some $\ga>0$. Note that $w_{\ep,y^{i}}^{i}$ satisfies Eq. (\ref{eq:scaled system}).
Thus,
\[
\left(\ep^{2}a+\ep b\sum_{j=1}^{k}\int|\na w_{\ep,y^{j}}^{j}|^{2}\right)\int|\na w_{\ep,y^{i}}^{i}|^{2}+V(a_{i})\left(w_{\ep,y^{i}}^{i}\right)^{2}=\int\left(w_{\ep,y^{i}}^{i}\right)^{p+1}.
\]
Substitute the above identity into (\ref{eq: A.1}). We obtain
\begin{equation}
\begin{aligned}I_{\ep}(W_{\ep,Y}) & =\frac{1}{2}\sum_{i=1}^{k}\int\left(V(x)-V(a_{i})\right)\left(w_{\ep,y^{i}}^{i}\right)^{2}-\frac{\ep b}{4}\left(\sum_{i=1}^{k}\int|\na w_{\ep,y^{i}}^{i}|^{2}\right)^{2}\\
 & \quad+\frac{1}{2}\int\sum_{i=1}^{k}\left(w_{\ep,y^{i}}^{i}\right)^{p+1}-\frac{1}{p+1}\int\left(\sum_{i=1}^{k}w_{\ep,y^{i}}^{i}\right)^{p+1}+O(e^{-\ga/\ep}).
\end{aligned}
\label{eq: A.2}
\end{equation}
Apply the arguments in the appendix of Li et al. \cite{LLPWX-2017},
we have
\begin{equation}
\int(V(x)-V(a_{j}))w_{\ep,y^{j}}^{j}=\ep^{3}(V(y^{j})-V(a_{j}))\int(w^{j})^{3}+O(\ep^{3+\theta}).\label{eq: A.3}
\end{equation}
Apply the following elementary inequalities

\[
\begin{aligned}\big||a+b|^{p}-a^{p}-b^{p}-pa^{p-1}b-pab^{p-1}\big| & \le\left\{ \begin{array}{ll}
C|a||b|^{p-1}, & \text{if }|b|\leq|a|,\\
C|a|^{p-1}|b| & \text{if }|b|\ge|a|,
\end{array}\right.\\
 & \leq C|a|^{\frac{p}{2}}|b|^{\frac{p}{2}},\,\,\,(1<p\leq2)
\end{aligned}
\]
and
\[
\big||a+b|^{p}-a^{p}-b^{p}-pa^{p-1}b-pab^{p-1}\big|\leq|a|^{p-2}|b|^{2}+|a|^{2}|b|^{p-2},\,\,\,(p>2).
\]
We derive
\begin{equation}
\begin{aligned}\int W_{Y}^{p+1} & =\sum_{i=1}^{k}\int\left(w_{\ep,y^{i}}^{i}\right)^{p+1}+(p+1)\int\left((w_{\epsilon,y^{1}}^{1})^{p}w_{\epsilon,y^{2}}^{2}+w_{\epsilon,y^{1}}^{1}(w_{\epsilon,y^{2}}^{2})^{p}\right)\\
 & \quad+\left\{ \begin{array}{ll}
C\int(w_{\epsilon,y^{1}}^{1})^{\frac{p}{2}}(w_{\epsilon,y^{2}}^{2})^{\frac{p}{2}}, & \ensuremath{1<p\leq2},\vspace{0.3cm}\\
C\int[(w_{\epsilon,y^{1}}^{1})^{p-1}(w_{\epsilon,y^{2}}^{2})^{2}(w_{\epsilon,y^{1}}^{1})^{2}(w_{\epsilon,y^{2}}^{2})^{p-1}], & p>2
\end{array}\right.\\
 & =\ep^{3}\sum_{i=1}^{k}\int\left(w^{i}\right)^{p+1}+O(\ep^{3+\theta}).
\end{aligned}
\label{eq: A.4}
\end{equation}
The required estimate follows from (\ref{eq: A.2}) (\ref{eq: A.3})
and (\ref{eq: A.4}).
\end{proof}

\emph{Acknowledgments. } The authors would like to thank Shusen Yan for many useful suggestions during the preparation of this paper.  Some of this research took place during
a one-month stay by the last named author at the Central China Normal University.
He would like to thank the institute for the gracious hospitality
during this time.

\end{document}